%% file: main.tex
\pdfoutput=1
\documentclass[oneside]{amsart}

\newcommand\tab[1][0.5cm]{\hspace*{#1}}

\usepackage[a4paper, total={6in, 9in}]{geometry}

\title[Cubic Chabauty for hyperelliptic curves]{A cubic Chabauty criterion for hyperelliptic curves}
\author{Lee Berry}
\address{Department of Mathematics, King’s College London, Strand, London, WC2R 2LS, UK}
\email{lee.berry@kcl.ac.uk}

\date\today
\input{header}
\begin{document}

\begin{abstract}
We establish a criterion for finiteness of the depth 3 Chabauty--Kim set $X(\mathbb{Q}_2)_3$ for hyperelliptic curves $X$ over $\mathbb{Q}$ with a rational Weierstrass point. To illustrate its applicability, we include an example of a genus $2$ curve satisfying the criterion whose Jacobian has N\'{e}ron--Severi rank $1$ and Mordell--Weil rank $4$.
\end{abstract}

\maketitle


\section{Introduction}

Let $X$ be a smooth, projective, geometrically integral curve over $\mathbb{Q}$ of genus $g\geq 2$, and let $p$ be a prime of good reduction for $X$. The \textit{Chabauty--Kim method} \cite{kim2005motivic,kim2009unipotent} equips the set of rational points $X(\mathbb{Q})$ with a tower of approximations of $p$-adic points: 
\begin{align*}
X(\mathbb{Q})\subseteq \cdots \subseteq X(\mathbb{Q}_p)_n\subseteq \cdots \subseteq X(\mathbb{Q}_p)_2\subseteq X(\mathbb{Q}_p)_1 \subseteq X(\mathbb{Q}_p).
\end{align*}
Although $X(\mathbb{Q})$ is finite by Faltings' theorem, \textit{Kim's conjecture} \cite[§3]{kim2009unipotent} asserts that, in fact, the \textit{depth $n$ Chabauty--Kim set} $X(\mathbb{Q}_p)_n$ is finite for all sufficiently large $n$. Accordingly, exploiting this setup to compute $X(\mathbb{Q})$ typically requires both establishing a finiteness criterion for $X(\mathbb{Q}_p)_n$ at some depth $n$ and a method to compute this set effectively. Fortunately, there exist such criteria, and corresponding methods of effective computation, for depths 1 and 2 sets due to Chabauty--Coleman \cite{Chab,colemanint} and Balakrishnan--Dogra \cite{Balakrishnan_2018}, respectively. However, at present there are no equally general analogous results for depth $n\geq 3$. Even in cases where Kim’s conjecture is known (for instance, it holds for hyperelliptic curves \cite{EllenbergHast}), these results prove only eventual finiteness in the above tower and do not establish criteria for finiteness at a given depth. 

Even at depth $n=3$, the Bloch--Kato conjectures \cite{bloch1990functions} predict a significant extension of the Chabauty--Kim method beyond its known effective instances at depths 1 and 2. For example, in the case $g=2$, these conjectures imply that $X(\mathbb{Q}_p)_3$ is generically finite whenever the Mordell--Weil rank of the Jacobian of $X$ is less than 12. By contrast, $X(\mathbb{Q}_p)_2$ is generically expected to be finite in this case only for rank less than 4. Motivated by this conditional applicability, in this paper we develop a criterion for finiteness of the depth 3 set $X(\mathbb{Q}_2)_3$ for hyperelliptic curves of arbitrary genus with a rational Weierstrass point. 

To deduce finiteness of $X(\mathbb{Q}_p)_n$, we study global \textit{Bloch--Kato Selmer groups} $H^1_f(\mathbb{Q},V_i)$ associated to certain $p$-adic Galois representations $V_i$ ($1\leq i\leq n$) defined by the fundamental group of $X$. These groups naturally arise from the definition of $X(\mathbb{Q}_p)_n$ and, in particular, a general theorem of Kim asserts that the desired finiteness follows from sufficiently sharp bounds for the $\mathbb{Q}_p$-dimensions of these Selmer groups (see Theorem \ref{KimThm} below). For instance, in the depth 2 case, obtaining a finiteness criterion requires bounding the dimension of $H^1_f(\mathbb{Q},\wedge^2 V_pJ)$, where $V_pJ$ denotes the $\mathbb{Q}_p$-rational Tate module of the Jacobian $J$ of $X$. Bounding this dimension in general is highly nontrivial and is related, via the Bloch--Kato conjectures, to obtaining the rank of a certain higher Chow group associated to $X$. As such, the \textit{quadratic Chabauty method for rational points} \cite{Balakrishnan_2018} circumvents the necessity to compute such bounds, instead applying Kim's theorem to establish finiteness of a set lying between $X(\mathbb{Q}_p)_1$ and $X(\mathbb{Q}_p)_2$. Specifically, this method guarantees that this intermediate set is finite whenever the rank of $J(\mathbb{Q})$ is less than $g+\rho-1$, where $\rho$ denotes the rank of the N\'{e}ron--Severi group of $J$.  

In the case that $X$ is a hyperelliptic curve, the recent development of 2-descent methods for Bloch--Kato Selmer groups \cite{bk1}, coupled with subsequent papers of Dogra \cite{bk2} and the author \cite{1stpaper}, yields effective bounds for the desired dimension of $H^1_f(\mathbb{Q},\wedge^2V_2J)$. As a consequence, this has yielded many new examples of curves, whose Jacobians have N\'{e}ron--Severi rank 1, for which finiteness of $X(\mathbb{Q}_2)_2$ has been verified. In this paper, we exploit this 2-descent machinery to derive an analogous criterion for depth 3 sets.

For the remainder of the introduction, fix $p=2$ and assume that $X$ is a hyperelliptic curve over $\mathbb{Q}$, defined as the smooth projective completion of the affine curve $y^2=f(x)$ such that the degree of $f\in \mathbb{Q}[x]$ is odd. In the depth 3 case, it is necessary to bound the $\mathbb{Q}_2$-dimension of the Bloch--Kato Selmer group $H^1_f(\mathbb{Q},V_3)$,  where $V_3$ is the cokernel of a natural map
\begin{align*}
{\wedge^3 V_2 J} \rightarrow V_2J\otimes\big(({\wedge^2 V_2J)/\mathbb{Q}_2(1)}\big),
\end{align*}
the definition of which will be given in Section \ref{CKSection}. Prior to stating the main theorem describing this bound, we introduce additional notation required for a particularly concise case of the main result.

Specifically, this case requires additional assumptions on the defining polynomial $f$ of $X$. Firstly, assume that $f$ is irreducible. Next, let $\alpha,\beta,\gamma \in \overline{\mathbb{Q}}$ denote three distinct roots of $f$, and define the number fields 
\begin{align*}
K_1\coloneqq \mathbb{Q}(\alpha), \tab  K_2\coloneqq \mathbb{Q}(\alpha+\beta+\gamma), \tab K_3\coloneqq \mathbb{Q}(\alpha,\beta+\gamma).
\end{align*}
For a given $K_i$, if $S$ is a finite set of primes in $\mathbb{Q}$ containing $\infty$, then define 
\begin{align*}
d_i\coloneqq \mathrm{rk}_{\mathbb{Z}}\mathcal{O}_{K_i,S}^\times +\dim_{\mathbb{F}_2}\mathrm{Cl}_S(K_i)[2],
\end{align*}
where $\mathcal{O}_{K_i,S}$ denotes the ring of $S$-integers (that is, the ring of elements that are integral away from the primes above $S$) and $\mathrm{Cl}_S(K_i)\coloneqq \mathrm{Cl}(\mathcal{O}_{K_i,S})$ denotes the $S$-class group of $K_i$; we omit explicit reference to $S$ in the notation $d_i$ for simplicity. Finally, assume that 
\begin{align}\label{starassumption}\tag{$\star$}
[K_2:\mathbb{Q}]=\frac{1}{3}g(4g^2-1), \tab\phantom{} [K_3:K_2]=3. 
\end{align}
A similar statement of the main theorem holds without this final assumption, although we would then have to modify the definitions of each $K_i$. 

We now state the main result, which details the desired effective bounds and their implications for a \textit{cubic Chabauty} criterion for hyperelliptic curves. 

\begin{introtheorem}[Theorem \ref{mainthm}, Proposition \ref{MainApprox}]\label{ThmA}
Let $X$ be a hyperelliptic curve over $\mathbb{Q}$ of genus $g \geq 2$ defined by the equation $y^2=f(x)$, where $f\in \mathbb{Q}[x]$ is a separable polynomial of odd degree. Let $S$ be a finite set of primes containing $\{2,\infty\}$ and the primes of bad reduction for $X$. 
\begin{itemize}
    \item[(1)] For notational simplicity, denote
    \begin{align}\label{cubicintroineq}
    d_{X,S}\coloneqq \dim_{\mathbb{F}_2} H^1(G_S,(J[2] \otimes (\wedge^2J[2])/\mathbb{F}_2)/{\wedge^3J[2]}),
    \end{align}
    where $J[2]$ denotes the $2$-torsion subgroup of the Jacobian of $X$, and $G_S$ denotes the Galois group of the maximal extension of $\mathbb{Q}$ that is unramified outside of $S$. Then $d_{X,S}$ is effectively computable and $\dim_{\mathbb{Q}_2}H^1_f(G_S,U[3])<d_{X,S}$. 
    \item[(2)] If, in addition, the defining polynomial $f$ is irreducible and condition \hypersetup{linkcolor=black}\eqref{starassumption}\hypersetup{linkcolor=blue} holds, then
    \begin{align*}
    d_{X,S}=d_3-(d_2+d_1)+\#S.
    \end{align*}
    \item[(3)] Suppose that the following inequality holds:
    \begin{align*}
    d_{X,S}+\dim_{\mathbb{Q}_2}H^1_f(G_S,\wedge^2 V_2J)<\frac{1}{6}(g+1)(2g+1)(7g-6)-\mathrm{rk}_{\mathbb{Z}}J(\mathbb{Q}).
    \end{align*}
    Then the depth 3 Chabauty--Kim set $X(\mathbb{Q}_2)_3$ is finite. 
\end{itemize}
\end{introtheorem}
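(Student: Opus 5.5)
Part (1) will follow the 2-descent strategy of \cite{bk1,bk2,1stpaper}, transplanted from $\wedge^2$ to the depth-3 graded piece. Let $T_3$ be the natural $\mathbb{Z}_2$-lattice in $V_3$, namely the cokernel of $\wedge^3 T_2J\to T_2J\otimes(\wedge^2T_2J/\mathbb{Z}_2(1))$. Its reduction mod $2$ is $M:=(J[2]\otimes(\wedge^2J[2])/\mathbb{F}_2)/\wedge^3J[2]$. Here $\mathbb{Z}_2(1)$ reduces to the trivial module $\mathbb{F}_2$ via the Weil pairing, and one must check that the relevant maps remain injective after reduction.

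From $0\to T_3\xrightarrow{2}T_3\to M\to0$ we get an injection $H^1(G_S,T_3)/2\hookrightarrow H^1(G_S,M)$. Since $H^1(G_S,T_3)$ is a finitely generated $\mathbb{Z}_2$-module, this gives
\[\dim_{\mathbb{Q}_2}H^1(G_S,V_3)\le \dim_{\mathbb{F}_2}H^1(G_S,T_3)/2 \le d_{X,S},\]
and the Bloch--Kato subspace $H^1_f$ sits inside. Two points need care. First, identifying $H^1_f(G_S,U[3])$ with $H^1_f(G_S,V_3)$ uses the graded pieces in Section \ref{CKSection}. Second, the inequality is strict. For strictness I would show that either $H^0(G_S,M)\neq0$ contributes a torsion class in $H^1(G_S,T_3)$ via the connecting map (for instance from an invariant line produced by the Weierstrass point), or that the local condition at $2$ or at $\infty$ kills at least one dimension. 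I expect the latter route: complex conjugation acts on $M$, and a local class at $\mathbb{R}$ not lying in the $f$-subspace gives a dimension drop. Effective computability follows because $M$ is a finite Galois module split by the splitting field of $f$, so $H^1(G_S,M)$ is computable by standard class group and unit computations.

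For part (2) I would describe $M$ through permutation modules. Since $J[2]\cong\mathbb{F}_2[\text{roots}]^{\text{sum}=0}/\langle\text{all}\rangle$ (odd degree $n=2g+1$), $J[2]$ is the augmentation submodule of $\mathbb{F}_2[R]$ for the root set $R$. Under $(\star)$ and irreducibility, the exterior and tensor powers decompose into induced modules:
\[\mathrm{Ind}_{K_1}\mathbb{F}_2 \;\leftrightarrow\; \text{roots},\qquad \mathrm{Ind}_{K_2}\mathbb{F}_2 \;\leftrightarrow\; \text{3-subsets via } \alpha+\beta+\gamma,\qquad \mathrm{Ind}_{K_3}\mathbb{F}_2 \;\leftrightarrow\; \text{(point, 2-subset) pairs}.\]
The degree condition $[K_2:\mathbb{Q}]=\binom{2g+1}{3}$ ensures these orbits are full. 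I would express $M$ in the Grothendieck group, or better via explicit short exact sequences, as $\mathrm{Ind}_{K_3}\mathbb{F}_2$ modulo $\mathrm{Ind}_{K_2}\mathbb{F}_2$ and $\mathrm{Ind}_{K_1}\mathbb{F}_2$ plus trivial pieces. By Shapiro, $\dim H^1(G_S,\mathrm{Ind}_K\mathbb{F}_2)=\dim H^1(G_{K,S},\mathbb{F}_2)$. By Kummer theory, $H^1(G_{K,S},\mu_2)\cong$ the $S$-Selmer group, which has dimension $d_K+1$ (the $+1$ from $-1$). Accounting for the boundary maps in the long exact sequences, and for the trivial summands contributing $H^1(G_S,\mathbb{F}_2)$ of dimension $\#S$, yields $d_3-d_2-d_1+\#S$. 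The main obstacle is here: verifying that all connecting maps between the $H^0$ and $H^1$ terms have the expected ranks, so that the dimensions add exactly. I would control this using the $H^0$ computations, which follow from $2$-transitivity-type facts implied by irreducibility and $(\star)$.

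Part (3) is then an application of Theorem \ref{KimThm}. Finiteness of $X(\mathbb{Q}_2)_3$ follows once
\[\sum_{i\le 3}\dim H^1_f(G_S,U[i])<\sum_{i\le3}\dim_{\mathbb{Q}_2}U[i]/F^0.\]
Using $\dim H^1_f(U[1])=\mathrm{rk}J(\mathbb{Q})$ (assuming Sha finite, or using the $2$-adic Selmer rank, which equals the rank when Sha$[2^\infty]$ is finite; otherwise one uses the Selmer rank), the bound from (1), and the Hodge-filtration counts $g$, $\binom{g}{2}\cdot$-type dimensions of $U[2]/F^0$ and $U[3]/F^0$, the total right-hand side is $\frac16(g+1)(2g+1)(7g-6)$. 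Here $U[3]/F^0=\dim V_3/2$ by weight considerations. The strict inequality in (1) absorbs the difference between $<$ and $\le$.
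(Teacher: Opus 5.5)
Your part (1) follows the paper's route: the lattice $\mathcal{T}=T\otimes\overline{\wedge^2T}/\wedge^3T$, its reduction mod $2$, and $H^1(G_S,\mathcal{T})/2\hookrightarrow H^1(G_S,M)$. The saturation you only say ``one must check'' is a real step, which the paper proves in a symplectic basis by exhibiting a maximal minor of determinant $\pm1$. There are, however, two genuine problems.

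\textbf{Part (2).} The missing idea is that every sequence involved splits as a Galois module:
\eqref{DefJ2}, and the case $n=3$ of Lemma~\ref{wedgenlemma};
$0\to J[2]\to J[2]\otimes\wedge^2J[2]\to J[2]\otimes\overline{\wedge^2J[2]}\to0$ (Lemma~\ref{splitlemmaJ2}); for $\deg f\equiv1\pmod 4$ this needs an explicit retraction, because $\mathbb{F}_2\to\wedge^2J[2]$ itself need not split;
$0\to\wedge^3J[2]\to J[2]\otimes\overline{\wedge^2J[2]}\to M\to0$ (Lemma~\ref{wedge3split}), via an explicit retraction that uses $\deg f$ odd.
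Together these give $M\oplus\mathrm{Ind}_{\mathbb{Q}}^{K_1}\mathbb{F}_2\oplus\mathrm{Ind}_{\mathbb{Q}}^{K_2}\mathbb{F}_2\cong\mathrm{Ind}_{\mathbb{Q}}^{K_3}\mathbb{F}_2\oplus\mathbb{F}_2$, so the $H^1$-dimensions simply add. This is also what reduces computability in (1) to $S$-units and class groups of \'{e}tale algebras of degree at most $g(4g^2-1)$, rather than of the splitting field.

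Your substitute, controlling the connecting maps through $H^0$ computations and transitivity, does not suffice. For $0\to A\to B\to C\to0$, the defect $\dim H^1(A)+\dim H^1(C)-\dim H^1(B)$ also contains the rank of $H^1(G_S,C)\to H^2(G_S,A)$, which no invariant computation sees. So an identity in the Grothendieck group does not determine $H^1$-dimensions.

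Also redo the final arithmetic. With your own counts ($d_i+1$ for $\mathrm{Ind}_{\mathbb{Q}}^{K_i}\mathbb{F}_2$ and $\#S$ for $\mathbb{F}_2$) you get $d_3-d_2-d_1+\#S-1$. The displayed formula is therefore right only if $\#S$ counts the finite primes of $S$. In the paper's genus-$2$ example one has $d_1=7$, $d_2=11$, $d_3\le 28$ and $\#S=3$, and the paper obtains $d_X\le 12=28-11-7+3-1$.

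\textbf{Part (3).} Your local count is wrong: $\dim H^1_f(\mathbb{Q}_2,U[3])=\dim D_{\mathrm{dR}}(U[3])/\mathrm{Fil}^0D_{\mathrm{dR}}(U[3])$ is not $\tfrac12\dim U[3]$.
By \eqref{cubicDesc}, $\dim U[3]=2g\bigl(\binom{2g}{2}-1\bigr)-\binom{2g}{3}=\tfrac83g(g^2-1)$.
$\mathrm{Fil}^0D_{\mathrm{dR}}(U[3])$ has dimension $g\binom{g}{2}-\binom{g}{3}=\tfrac13g(g^2-1)$. This is because $\mathrm{Fil}^0D_{\mathrm{dR}}(V_2J)$ is $g$-dimensional with $\mathrm{Fil}^1=0$, and $\mathbb{Q}_2(1)$ contributes nothing to $\mathrm{Fil}^0$.
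So the local term is $\tfrac73g(g^2-1)$. Adding $\tfrac12(3g-2)(g+1)$ from $U[1]$ and $U[2]$ gives $\tfrac16(g+1)(2g+1)(7g-6)$.
Your value $\tfrac43g(g^2-1)$ is $\dim H^0(\mathbb{R},U[3])$, the archimedean term in the Euler characteristic formula of Lemma~\ref{CondDep3}. It would give $14$ instead of $20$ when $g=2$.

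Two smaller points:
No finiteness of the Tate--Shafarevich group is needed. The Selmer scheme is defined by requiring the image in $H^1(G_S,V_2J)$ to lie in $J(\mathbb{Q})\otimes\mathbb{Q}_2$, which gives $\dim\mathrm{Sel}(U_3)\le\mathrm{rk}_{\mathbb{Z}}J(\mathbb{Q})+\sum_{i=2}^{3}\dim H^1_f(\mathbb{Q},U[i])$ unconditionally. Also $\dim H^1_f(\mathbb{Q},U[2])\le\dim H^1_f(\mathbb{Q},\wedge^2V_2J)$ since $\wedge^2V_2J\cong U[2]\oplus\mathbb{Q}_2(1)$.
Your strictness discussion in (1) is only a heuristic: you would need an actual global class in $H^1(G_S,M)$ whose localisation leaves the image of the integral local conditions. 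But (3) needs only the non-strict bound, since its hypothesis is already strict.
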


The assertion of effective computability in part (1) follows from $d_{X,S}$ generally admitting an analogous description to part (2) in terms of relevant $S$-unit and $S$-class groups; we reiterate that the additional assumptions are necessary only for expressing $d_{X,S}$ compactly.

To demonstrate the practical applicability of this cubic Chabauty criterion, we verify it for the following high-rank curve: 

\begin{introtheorem}[Theorem \ref{rank4thm}]\label{ThmB}
Let $X$ be the genus $2$, Mordell--Weil rank $4$ hyperelliptic curve over $\mathbb{Q}$ defined as the smooth projective completion of the affine curve
\begin{align*}
y^2=4x^5 + 8x^4 + 4x^2 + 8x + 1.
\end{align*}
Then the depth $3$ Chabauty--Kim set $X(\mathbb{Q}_2)_3$ is finite.
\end{introtheorem}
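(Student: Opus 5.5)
The statement to prove is Theorem B: finiteness of $X(\mathbb{Q}_2)_3$ for the explicit genus $2$, rank $4$ curve. The plan is to verify the inequality of Theorem \ref{ThmA}(3) numerically.

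For $g=2$ the right-hand side is
\[
\tfrac{1}{6}(g+1)(2g+1)(7g-6)-\mathrm{rk}\,J(\mathbb{Q})=\tfrac{1}{6}\cdot 3\cdot 5\cdot 8-4=16 .
\]
So it suffices to show
\[
d_{X,S}+\dim_{\mathbb{Q}_2}H^1_f(G_S,\wedge^2 V_2J)<16 .
\]

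\textbf{Step 1: set-up.}
\begin{itemize}
\item Compute the discriminant of $f=4x^5+8x^4+4x^2+8x+1$ and factor it to find the bad primes. Take $S=\{2,\infty\}\cup\{\text{bad primes}\}$, kept as small as possible.
\item Confirm that the Mordell--Weil rank is $4$ by $2$-descent together with four independent points. The rank enters only through the bound above.
\item Check that $\mathrm{NS}(J)$ has rank $1$. This is not logically needed, but it explains why quadratic Chabauty does not apply.
\end{itemize}

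\textbf{Step 2: the depth 2 term.} Bound $\dim H^1_f(G_S,\wedge^2V_2J)$ using the $2$-descent results quoted in the introduction (\cite{bk1,bk2,1stpaper}). Concretely:
\begin{itemize}
\item Bound it by the $\mathbb{F}_2$-dimension of an $H^1(G_S,\wedge^2J[2]/\mathbb{F}_2)$-type group, minus local corrections.
\item Express that group through $S$-units and $S$-class groups of the étale algebras attached to pairs of roots of $f$.
\end{itemize}

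\textbf{Step 3: the depth 3 term.} Compute $d_{X,S}$ from Theorem \ref{ThmA}(1)–(2).
\begin{itemize}
\item Check whether $f$ (after rescaling to make it monic) is irreducible over $\mathbb{Q}$ with Galois group $S_5$. In that case $[K_2:\mathbb{Q}]=\binom{5}{3}=10=\tfrac13\cdot 2\cdot 15$ and $[K_3:K_2]=3$, so condition \eqref{starassumption} holds.
\item Then $d_{X,S}=d_3-(d_2+d_1)+\#S$, where $K_1$, $K_2$, $K_3$ have degrees $5$, $10$ and $30$.
\item Compute the $S$-unit ranks by Dirichlet's theorem: $r_1+r_2-1$ plus the number of primes above $S$.
\item Compute the $2$-ranks of the $S$-class groups in Magma/PARI. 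The degree-$30$ field may need GRH, or an unconditional certificate if one is available.
\end{itemize}

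\textbf{Step 4: conclusion.} Add the two bounds and check that the sum is at most $15$. Theorem \ref{ThmA}(3) then gives finiteness of $X(\mathbb{Q}_2)_3$.

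\textbf{Main obstacle.} The difficulty is arithmetic rather than conceptual. The class group computation in the degree-$30$ field $K_3$ is expensive and must be made rigorous. More seriously, the inequality must actually hold: the $S$-unit contributions grow with $\#S$ and with the number of real places of $K_3$.

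If the naive count fails, I would first try two things:
\begin{itemize}
\item Shrink $S$, using that $f$ was chosen with little bad reduction.
\item Sharpen the $\wedge^2$ bound using local conditions at $2$, imposing the Bloch--Kato $f$-condition on the mod-$2$ classes rather than using all of $H^1(G_S,\cdot)$.
\end{itemize}
The example was presumably chosen so that $\#S=2$ or $3$ and the relevant class groups have trivial $2$-part.
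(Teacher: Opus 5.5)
Your outline is correct and has the same shape as the paper's proof. Both verify $d_{X,S}+\dim H^1_f(\mathbb{Q},U[2])<16$, and both bound the depth-$2$ term by $3$ using the boundary-map techniques of \cite{bk2}, which is the kind of local refinement you anticipate. The difference is in how you handle the dominant term of $d_{X,S}$.

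You plan to compute $d_3$ directly, i.e.\ the $S$-units and the $2$-rank of $\mathrm{Cl}_S$ of the degree-$30$ field $K_3=\mathbb{Q}(\alpha,\beta+\gamma)$, and you rightly flag this as the bottleneck. The paper never computes in $K_3$:
\begin{itemize}
\item In genus $2$ it writes $K_3=\mathbb{Q}_f^{(1,2)}=\mathbb{Q}_{f,h}(\sqrt{c})$. Here $\mathbb{Q}_{f,h}$ is the degree-$15$ field obtained by adjoining to $\mathbb{Q}(\alpha)$ a root of the cubic resolvent of $f(x)/(x-\alpha)$.
\item The sequence $0\to\mathbb{F}_2\to\mathrm{Ind}_{\mathbb{Q}_{f,h}}^{K_3}\mathbb{F}_2\to\mathbb{F}_2\to0$ then gives Lemma \ref{d1lemma}: $\dim H^1(G_S,\mathbb{F}_2[\mathcal{R}_f]\otimes\wedge^2J[2])\le 2(13+1)+1=29$.
\item With $S=\{2,1115467,\infty\}$ and the data for $\mathbb{Q}_f^{(2)}$ and $\mathbb{Q}_f$, this yields $d_X\le 29-12-8+3=12$. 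Hence $12+3=15<16$.
\end{itemize}
The paper's bound is crude: it bounds $\ker\delta_1$ by all of $H^1(G_{\mathbb{Q}_{f,h},S},\mathbb{F}_2)$. Your route would give $d_{X,S}$ exactly, so it would also succeed. The price is a degree-$30$ class group computation that you yourself expect to be GRH-conditional. The paper trades sharpness for a feasible degree-$15$ computation, and the margin still suffices.

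Since that margin is exactly one, two cautions.

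First, be careful with normalisations if you plug into the formula in part (2) of the introductory theorem.
\begin{itemize}
\item Each field $L$ contributes $\dim(L^\times\otimes\mathbb{F}_2)_S=d_L+1$, the extra $1$ coming from $-1$.
\item $\mathbb{Q}$ contributes $\dim H^1(G_S,\mathbb{F}_2)=\#S$, with $\infty$ counted.
\item Hence $d_{X,S}=d_3-d_2-d_1+\#S-1$. The displayed formula is right only if $\#S$ counts finite primes.
\item With $\#S=3$ you would overcount by one. With the paper's bound $d_3\le 28$, the left-hand side would then be $16$ and the criterion would not be verified.
\end{itemize}

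Second, the set-up at $p=2$ requires $X$ to have good reduction at $2$, and your Step 1 does not check this. Here $\mathrm{disc}(f)$ is even, so factoring it would wrongly suggest that $2$ is bad. The substitution $y=2Y+1$ gives the model $Y^2+Y=x^5+2x^4+x^2+2x$, which has good reduction at $2$. So the only bad prime is $1115467$.
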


Note that the Bloch--Kato conjectures imply that the depth 2 set of this example is not expected to be finite. In fact, to the best of the author's knowledge, this is only the second example of a verified finite depth $n\geq 2$ set for a genus 2 curve with a rank $r\geq 3$ simple Jacobian (the first is contained in \cite{bk1}). 

\subsection{Notation and conventions}\label{Notation}

Let $K$ be a field, let $K^{\mathrm{sep}}$ be a fixed separable closure of $K$, and let $S$ be a finite set of primes in $K$ that contains the archimedean primes. Denote by $G_K$ (resp. $G_{K,S}$) the Galois group $\mathrm{Gal}(K^{\mathrm{sep}}/K)$ (resp. the Galois group of the maximal extension of $K$ unramified outside of $S$). For notational simplicity, we denote $G_S\coloneq G_{\mathbb{Q},S}$. 

If $K$ is a number field, $V$ is a finite-dimensional $\mathbb{Q}_p$-vector space with a continuous action of the absolute Galois group $G_K$, and $v$ is a prime in $K$, then the \textit{local Bloch--Kato Selmer group} $H^1_f(K_v,V)$ is the subspace of $H^1(K_v,V)$ defined as follows:
\begin{align*}
H^1_f(K_v,V)\coloneqq \begin{cases}
\mathrm{ker}\big(H^1(K_v,V)\rightarrow H^1(K_v,V \otimes B_{\mathrm{cris}})\big), &\text{if $v|p$} \\
H^1_{\mathrm{nr}}(K_v,V), &\text{otherwise.}
\end{cases}
\end{align*}
Subsequently, the \textit{global Bloch--Kato Selmer group} is defined by
\begin{align*}
H^1_f(K,V)\coloneqq \bigcap_v \mathrm{ker} \big(H^1(K,V)\rightarrow H^1(K_v,V)/H^1_f(K_v,V)\big).
\end{align*}
Moreover, denote by $D_{\mathrm{dR}}(V)$ the subspace $H^0(K,V\otimes B_{\mathrm{dR}})$ of $G_K$-invariant elements in $V\otimes B_{\mathrm{dR}}$; this subspace is equipped with a separated and exhaustive decreasing filtration, which we denote by $\mathrm{Fil}^\bullet D_{\mathrm{dR}}(V)$. 

Suppose $M=\prod_{i,j}M_{i,j}$ and $L=\prod_{i}L_i$ are finite \'{e}tale $K$-algebras such that each $M_{ij}/L_i$ is an extension of fields. For $A=(A_{ij})$ a tuple of $G_{M_{ij}}$-modules, we define 
\begin{align*}
\mathrm{Ind}_L^MA\coloneqq \bigoplus_{i,j}\mathrm{Ind}^{G_{M_{ij}}}_{G_{L_i}}A_{ij}. 
\end{align*}
For a separable polynomial $f\in K[x]$, we define the \'{e}tale algebra $K_f\coloneqq K[x]/(f)$. We will frequently exploit the well-known isomorphism of $\mathbb{F}_2[G_K]$-modules between $\mathrm{Ind}_K^{K_f}\mathbb{F}_2$ and the free $\mathbb{F}_2$-vector space generated by the roots of $f$.

We define an \textit{odd hyperelliptic curve} $X$ over $K$ as the smooth projective completion of the affine curve $y^2=f(x)$ such that the degree of the \textit{defining polynomial} $f\in K[x]$ is odd. Such a model exists if and only if $X$ has a rational Weierstrass point over $K$. We will often refer to $f$ as the defining polynomial of such a model, without explicitly restating that $X$ is given by $y^2=f(x)$. 

Finally, for an abelian group $N$ and an integer $k>0$, denote 
\begin{align*}
\wedge^k N \coloneqq N^{\otimes k}/I_k,
\end{align*}
where $I_k$ is generated by the classes $n_1\otimes \cdots \otimes n_k\in N^{\otimes k}$ such that $n_i=n_j$ for some $i\neq j$.

\subsection*{Acknowledgements} I am particularly indebted to my supervisor, Netan Dogra, for many invaluable discussions regarding explicit descent and the Chabauty--Kim method. I am also grateful to Corijn Rudrum and Giorgio Navone for helpful discussions. Finally, I am grateful to the examiners of my PhD thesis, Vladimir Dokchitser and Adam Morgan, for several suggestions that led to improvements to this paper. This work was supported by the Royal Society [RF\textbackslash ERE\textbackslash 210020] and a PhD studentship from the Faculty of Natural, Mathematical and Engineering Sciences at King's College London.

\section{Explicit descent and the Chabauty--Kim method}\label{CKSection}

Let $X$ be a smooth, projective, geometrically integral curve over $\mathbb{Q}$ of genus $g\geq 2$ and fix a prime $p$ of good reduction for $X$; we do not yet impose that $X$ is hyperelliptic. Denote by $\pi_1^{\mathbb{Q}_p}(X_{\overline{\mathbb{Q}}},b)$ the $\mathbb{Q}_p$\textit{-pro-unipotent \'{e}tale fundamental group} of $X$ based at a rational point $b\in X(\mathbb{Q})$. This fundamental group can, for instance, be defined as the continuous $\mathbb{Q}_p$-Mal\u{c}ev completion of the geometric \'{e}tale fundamental group $\pi_1^{\textrm{\'{e}t}}(X_{\overline{\mathbb{Q}}},b)$. Denote by $U^{(n)}$ the $n^{\textrm{th}}$ term in the descending central series filtration of $\pi_1^{\mathbb{Q}_p}(X_{\overline{\mathbb{Q}}},b)$, indexed by integers $n\geq 0$, such that $U^{(0)}\coloneqq\pi_1^{\mathbb{Q}_p}(X_{\overline{\mathbb{Q}}},b)$. Finally, define 
\begin{align*}
U_n\coloneqq \pi_1^{\mathbb{Q}_p}(X_{\overline{\mathbb{Q}}},b)/U^{(n)}, \tab U[n]\coloneqq \mathrm{ker} (U_n\rightarrow U_{n-1}).
\end{align*} 

Fundamental to the Chabauty--Kim method is the following commutative square:
\begin{center}
\begin{tikzcd}
X(\mathbb{Q}) \arrow{r} \arrow{d}{j_n} & X(\mathbb{Q}_p)\arrow{d}{j_{n,p}} \\ 
\mathrm{Sel}(U_n) \arrow{r}{\mathrm{loc}_{n,p}} & H^1(\mathbb{Q}_p,U_n),
\end{tikzcd}
\end{center}
from which we define the \textit{depth $n$ Chabauty--Kim set} as
\begin{align*}
X(\mathbb{Q}_p)_n\coloneqq j_{n,p}^{-1} \big(\mathrm{loc}_{n,p}(\mathrm{Sel}(U_n))\big).
\end{align*}

We now define the constituents of this square. We denote by $S$ a set of primes that contains $\{p,\infty\}$ and the primes of bad reduction for $X$. By foundational results in \cite{kim2005motivic,kim2009unipotent}, the non-abelian cohomology (pointed) sets $H^1(G,U_n)$, for $G \in \{G_S,G_{\mathbb{Q}_\ell}\}$ and $\ell$ any prime, are each isomorphic to the $\mathbb{Q}_p$-points of an affine algebraic variety over $\mathbb{Q}_p$. Moreover, if we define the subscheme $H^1_f(\mathbb{Q}_\ell,U_n)\subseteq H^1(\mathbb{Q}_\ell,U_n)$, which parametrises crystalline $G_{\mathbb{Q}_\ell}$-equivariant $U_n$-torsors (in the sense of the discussion preceding \cite[Lemma 5]{kim2009unipotent}), then Kim showed that it is itself the $\mathbb{Q}_p$-points of a variety. 

Subsequently, following \cite{Balakrishnan_2018}, we define the global \textit{Selmer scheme} of $U_n$ as the reduced subscheme of $H^1(G_S,U_n)$ consisting of classes $\xi \in H^1(G_S,U_n)$ such that: 
\begin{itemize}
    \item $\mathrm{loc}_{n,\ell}(\xi)\in j_{n,\ell}(X(\mathbb{Q}_\ell))$ for all primes $\ell\neq p$,
    \item $\mathrm{loc}_{n,p}(\xi)\in H^1_f(\mathbb{Q}_p,U_n)$,
    \item the image of $\xi$ in $H^1(G_S,U_1)\simeq H^1(G_S,V_pJ)$ is contained in the image of $J(\mathbb{Q})\otimes \mathbb{Q}_p$,
\end{itemize}
where $V_pJ$ will continue to denote the $\mathbb{Q}_p$-rational Tate module of $J$. The final requirement in this definition makes implicit use of the map from $H^1(G_S,U_n)$ to $H^1(G_S,U_1)$ under the well-known identification $U_1 \simeq V_pJ$. 

We denote by $j_n$ the \textit{pro-unipotent Kummer map}, which sends a rational point $x\in X(\mathbb{Q})$ to the $G_S$-equivariant $U_n$-torsor defined by pushing out the path torsor $\pi_1^{\mathbb{Q}_p}(X_{\overline{\mathbb{Q}}};b,x)$ along the quotient map that defines $U_n$. For each prime $\ell$, we denote by $j_{n,\ell}$ the local analogue of $j_n$, mapping a $\mathbb{Q}_\ell$-point to the corresponding $G_{\mathbb{Q}_\ell}$-equivariant torsor. Finally, we denote by $\mathrm{loc}_{n,p}$ the localisation map at $p$, which, by \cite[Proposition 2]{kim2005motivic}, defines an algebraic morphism of schemes $H^1(G_S,U_n) \rightarrow H^1(\mathbb{Q}_p,U_n)$.

The following result is the general theorem of Kim alluded to in the introduction, which underlies each of the finiteness criteria mentioned in this paper.

\begin{theorem}[\texorpdfstring{\cite[Theorem 1]{kim2009unipotent}}{}]\label{KimThm}
Suppose that 
\begin{align}\label{SelDimIneq}
\dim\mathrm{Sel}(U_n)<\dim H^1_f(\mathbb{Q}_p,U_n).
\end{align}
Then $X(\mathbb{Q}_p)_n$ is finite. 
\end{theorem}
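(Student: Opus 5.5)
The plan is to prove the statement by a dimension count followed by a $p$-adic analytic argument on residue discs, following Kim's original strategy. We use that $p$ is a prime of good reduction, that $\mathrm{loc}_{n,p}$ is an algebraic morphism (\cite[Proposition 2]{kim2005motivic}), and that $\mathrm{Sel}(U_n)$ and $H^1_f(\mathbb{Q}_p,U_n)$ are (the $\mathbb{Q}_p$-points of) varieties over $\mathbb{Q}_p$.

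\textbf{Step 1: the Selmer image lies in a proper closed subset.} Since $\mathrm{loc}_{n,p}$ is algebraic, the Zariski closure $Z$ of $\mathrm{loc}_{n,p}(\mathrm{Sel}(U_n))$ in $H^1_f(\mathbb{Q}_p,U_n)$ has dimension at most $\dim \mathrm{Sel}(U_n)$. By inequality \eqref{SelDimIneq}, $Z$ is therefore a proper closed subvariety.

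Next we identify $H^1_f(\mathbb{Q}_p,U_n)$ with an affine space. By $p$-adic Hodge theory for the pro-unipotent fundamental group in good reduction (Olsson's comparison theorem together with Kim's description of crystalline torsors), there is an isomorphism of varieties
\begin{align*}
H^1_f(\mathbb{Q}_p,U_n)\simeq U_n^{\mathrm{dR}}/\mathrm{Fil}^0 U_n^{\mathrm{dR}} .
\end{align*}
The target is an affine space. Since $Z$ is a proper closed subset of it, there is a nonzero polynomial function $F$ on $H^1_f(\mathbb{Q}_p,U_n)$ vanishing on $Z$, and hence on $\mathrm{loc}_{n,p}(\mathrm{Sel}(U_n))$.

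\textbf{Step 2: analyticity of the local Kummer map.} We first check that $j_{n,p}$ lands in $H^1_f(\mathbb{Q}_p,U_n)$; this holds because path torsors are crystalline at a prime of good reduction. Under the isomorphism above, $j_{n,p}$ becomes the de Rham unipotent Albanese (period) map. On each residue disc $]\bar x[\subseteq X(\mathbb{Q}_p)$, its coordinates are given by Coleman iterated integrals of length at most $n$. Concretely, choosing a basepoint $x_0$ in the disc, they are polynomials in iterated integrals $\int_{x_0}^{z}\omega_{i_1}\cdots\omega_{i_k}$ of a basis of $H^1_{\mathrm{dR}}$. Each such integral is a convergent power series in a local parameter $t$ on the disc. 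Hence $F\circ j_{n,p}$ is a convergent power series on each residue disc, and $X(\mathbb{Q}_p)_n \cap\, ]\bar x[$ is contained in its zero set.

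\textbf{Step 3: non-vanishing, the main obstacle.} The key point is that $F\circ j_{n,p}$ is not identically zero on any residue disc. This amounts to showing that the image $j_{n,p}(]\bar x[)$ is Zariski dense in $U_n^{\mathrm{dR}}/\mathrm{Fil}^0$. I would argue this as Kim does. The iterated integrals $\int_{x_0}^z \omega_{i_1}\cdots\omega_{i_k}$, for words in a basis of $H^1_{\mathrm{dR}}(X_{\mathbb{Q}_p})$, are algebraically independent over the field of functions on the disc modulo the shuffle relations. This is proved by induction on length, using the differential equations $d\!\int\omega_{i_1}\cdots\omega_{i_k} = (\int\omega_{i_1}\cdots\omega_{i_{k-1}})\,\omega_{i_k}$ and the fact that non-exact differentials have non-algebraic primitives (a Chen/Ree-type argument). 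Since the coordinate ring of $U_n^{\mathrm{dR}}$ is the shuffle algebra on these words, no nonzero polynomial can vanish on the image of the disc.

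\textbf{Conclusion.} By Strassmann's theorem, a nonzero convergent power series on a disc has finitely many zeros. Since $X(\mathbb{Q}_p)$ is compact, it is covered by finitely many residue discs, so $X(\mathbb{Q}_p)_n$ is finite.
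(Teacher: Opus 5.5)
The paper does not prove this statement itself; it only cites \cite[Theorem 1]{kim2009unipotent}, and your outline is essentially Kim's argument there: a dimension count gives a nonzero algebraic function on $H^1_f(\mathbb{Q}_p,U_n)\simeq U_n^{\mathrm{dR}}/\mathrm{Fil}^0U_n^{\mathrm{dR}}$ vanishing on the Selmer image, you pull it back along the Coleman-iterated-integral description of $j_{n,p}$, Zariski density of each residue disc's image gives nonvanishing, and Strassmann's theorem gives finiteness. Two points need more care: in Step 3 the algebraic independence must be over the field of rational functions on $X$ (not ``functions on the disc''), since that is what absorbs the algebraic coordinates of the Hodge section needed to pass from $U_n^{\mathrm{dR}}$ to the quotient by $\mathrm{Fil}^0$; and Strassmann should be applied after rescaling $t=ps$, so that the series has coefficients tending to $0$ on a closed disc.
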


The dimension on the right-hand side of \eqref{SelDimIneq}, as well as a bound for the dimension on the left-hand side, can be computed via Bloch--Kato Selmer groups associated to the graded pieces $U[n]$. For instance, we have the inequality 
\begin{align*}
\dim \mathrm{Sel}(U_n) \leq\mathrm{rk}_\mathbb{Z}J(\mathbb{Q})
+\sum_{i=2}^n \dim_{\mathbb{Q}_p}H^1_f(\mathbb{Q},U[i]),
\end{align*}
as stated in the proof of \cite[Lemma 5.2]{chabnumberfields}, where it is attributed to \cite[Propositions 1.1, 1.2 and 1.3]{tangential}. In particular, inequality \eqref{SelDimIneq} holds whenever 
\begin{align}\label{GeneralDimIneq}
\mathrm{rk}_{\mathbb{Z}}J(\mathbb{Q})+\sum_{i=2}^n\dim_{\mathbb{Q}_p}H^1_f(\mathbb{Q},U[i])<\sum_{i=1}^n\dim_{\mathbb{Q}_p}H^1_f(\mathbb{Q}_p,U[i]),
\end{align}
which will provide a more practical means of verifying the conditions of Theorem \ref{KimThm}. 

Accordingly, to obtain a cubic Chabauty criterion it suffices to consider exclusively the dimensions of the Bloch--Kato Selmer groups associated to the graded pieces $U[i]$ for $i\leq 3$. Firstly, the $i=1$ case is straightforward given the above isomorphism $U[1]=\pi_1^{\mathbb{Q}_p}(X_{\overline{\mathbb{Q}}},b)^{ab}\simeq V_pJ$. Next, for $i=2$ we have the split short exact sequence
\begin{align}\label{quadraticDesc}
0\rightarrow \mathbb{Q}_p(1) \rightarrow \wedge^2 V_pJ \rightarrow U[2] \rightarrow 0,
\end{align}
defined by the dual of the cup product map 
\begin{align}\label{cupprod}
\cup:\wedge^2H^1_{\textnormal{\'{e}t}}(X_{\overline{\mathbb{Q}}},\mathbb{Q}_p)\rightarrow H^2_{\textnormal{\'{e}t}}(X_{\overline{\mathbb{Q}}},\mathbb{Q}_p)\simeq \mathbb{Q}_p(-1).
\end{align} 
This sequence can, for instance, be deduced from the standard presentation of a surface group on $2g$ generators, given that $\pi_1^{\mathbb{Q}_p}(X_{\overline{\mathbb{Q}}},b)$ is the Mal\u{c}ev completion of the topological fundamental group of $X(\mathbb{C})$.

For $i\geq 3$, a description of $U[i]$ follows from \cite[Corollary 8.3]{Hain1995InfinitesimalPO}, under the isomorphism between $U[i]$ and the graded pieces of the corresponding Lie algebra described there. In particular, in the $i=3$ case we obtain the following short exact sequence:
\begin{align}\label{cubicDesc}
0 \rightarrow {\wedge^3 V_pJ} \rightarrow V_pJ \otimes U[2] \rightarrow U[3] \rightarrow 0,
\end{align}
which is induced by the map 
\begin{align*}
{\wedge^3 V_pJ} &\rightarrow V_pJ\otimes \wedge^2 V_pJ \\
v_1\wedge v_2 \wedge v_3 &\mapsto v_1\otimes (v_2\wedge v_3)+v_2\otimes (v_3\wedge v_1)+v_3\otimes (v_1\wedge v_2).
\end{align*}
With these explicit descriptions, we now state the $n=3$ instance of Theorem \ref{KimThm}.  

\begin{lemma}\label{netancor3}
Suppose that the following inequality holds:
\begin{align*}
\dim_{\mathbb{Q}_p} H^1_f(\mathbb{Q},U[2])+\dim_{\mathbb{Q}_p}H^1_f(\mathbb{Q},U[3]) < \frac{1}{6}(g+1)(2g+1)(7g-6) -\mathrm{rk}_\mathbb{Z}J(\mathbb{Q}).
\end{align*}
Then $X(\mathbb{Q}_p)_3$ is finite. 
\end{lemma}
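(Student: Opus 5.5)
The plan is to apply inequality \eqref{GeneralDimIneq} with $n=3$, which implies \eqref{SelDimIneq}, and then conclude with Theorem \ref{KimThm}. The left-hand side of \eqref{GeneralDimIneq} for $n=3$ is exactly $\mathrm{rk}_{\mathbb{Z}}J(\mathbb{Q})+\dim H^1_f(\mathbb{Q},U[2])+\dim H^1_f(\mathbb{Q},U[3])$. So everything reduces to showing
\begin{align*}
\sum_{i=1}^3 \dim_{\mathbb{Q}_p} H^1_f(\mathbb{Q}_p,U[i]) = \frac{1}{6}(g+1)(2g+1)(7g-6).
\end{align*}

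The local dimensions come from the standard formula for a de Rham representation $W$ of $G_{\mathbb{Q}_p}$ with $H^0(\mathbb{Q}_p,W)=0$:
\begin{align*}
\dim H^1_f(\mathbb{Q}_p,W)=\dim D_{\mathrm{dR}}(W)/\mathrm{Fil}^0 D_{\mathrm{dR}}(W).
\end{align*}
Each $U[i]$ is crystalline because $p$ is a prime of good reduction. Each $U[i]$ is also pure of weight $-i<0$, so $H^0(\mathbb{Q}_p,U[i])=0$ and the formula applies. Alternatively, one can use Kim's identification $H^1_f(\mathbb{Q}_p,U_n)\simeq D_{\mathrm{dR}}(U_n)/\mathrm{Fil}^0$ and additivity along the central series. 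The computation then reduces to counting Hodge--Tate weights.

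$V_pJ$ has Hodge--Tate weights $0$ and $1$ (in the convention where $\mathbb{Q}_p(1)$ has weight $1$), each with multiplicity $g$. The quotient $D_{\mathrm{dR}}/\mathrm{Fil}^0$ has dimension equal to the number of weights $\geq 1$, counted with multiplicity. This gives:
\begin{itemize}
\item[(i)] For $U[1]=V_pJ$, the dimension is $g$.
\item[(ii)] $\wedge^2V_pJ$ has weights $2$, $1$ and $0$ with multiplicities $\binom g2$, $g^2$ and $\binom g2$, so its contribution is $\binom g2+g^2$. By the split sequence \eqref{quadraticDesc}, $U[2]$ loses one weight-$1$ piece, so $\dim=\binom g2+g^2-1$.
\item[(iii)] By \eqref{cubicDesc}, the contribution of $U[3]$ is that of $V_pJ\otimes U[2]$ minus that of $\wedge^3V_pJ$. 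Write $\dim U[2]=\binom{2g}{2}-1$. In $V_pJ\otimes U[2]$, the number of weights $\le 0$ is $g\cdot\binom g2$, so the count with weight $\ge1$ is $2g(\binom{2g}2-1)-g\binom g2$. In $\wedge^3V_pJ$, the only weight-$0$ piece is $\wedge^3$ of the weight-$0$ part, of dimension $\binom g3$, so the count with weight $\geq 1$ is $\binom{2g}{3}-\binom g3$.
\end{itemize}

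The final step is to sum these and simplify to $\tfrac16(g+1)(2g+1)(7g-6)$. As a check at $g=2$, the three terms are $2$, $4$ and $2\cdot 2\cdot5-2-(4-0)=14$, giving $20$. The formula gives $\tfrac16\cdot3\cdot5\cdot8=20$, which agrees.

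The only delicate point, and the expected main obstacle, is justifying additivity of $\dim D_{\mathrm{dR}}/\mathrm{Fil}^0$ along the exact sequences \eqref{quadraticDesc} and \eqref{cubicDesc}. The concern is that exactness of $D_{\mathrm{dR}}$ and strictness of the filtrations must hold. This is automatic because $D_{\mathrm{dR}}$ is exact on de Rham representations and its filtration is strict. The vanishing $H^0=0$ from weights completes the argument. Everything else is algebraic bookkeeping with binomial coefficients.
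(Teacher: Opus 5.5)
Your proposal is correct and follows essentially the same route as the paper: you reduce via \eqref{GeneralDimIneq} and Theorem \ref{KimThm} to computing $\sum_{i\le 3}\dim D_{\mathrm{dR}}(U[i])/\mathrm{Fil}^0D_{\mathrm{dR}}(U[i])$, justified by the negative weights, and then count along \eqref{quadraticDesc} and \eqref{cubicDesc}. The only difference is cosmetic: the paper cites \cite[Lemma 25]{bk1} for the $i\le 2$ contribution $\tfrac12(3g-2)(g+1)$ and writes the $U[3]$ term as $\dim D_{\mathrm{dR}}(U[3])-\dim \mathrm{Fil}^0D_{\mathrm{dR}}(U[3])$, which gives the same numbers as your Hodge--Tate weight count.
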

\begin{proof}
By the inequality \eqref{SelDimIneq}, it suffices to compute the $\mathbb{Q}_p$-dimensions of the local Bloch--Kato Selmer groups $H^1_f(\mathbb{Q}_p,U[i])$ for $i\leq 3$. To that end, since each $U[i]$ is pure of negative weight, we may exploit the isomorphism
\begin{align*}
H^1_f(\mathbb{Q}_p,U[i])\simeq D_{\mathrm{dR}}(U[i])/\mathrm{Fil}^0D_{\mathrm{dR}}(U[i]),
\end{align*}
given that computation of the $\mathbb{Q}_p$-dimension of the right-hand side follows straightforwardly from the descriptions of $U[i]$ above. For $i\leq 2$, this calculation is essentially the proof of \cite[Lemma 25]{bk1}, which asserts that
\begin{align*}
\dim_{\mathbb{Q}_p}H^1_f(\mathbb{Q}_p,U[1])+\dim_{\mathbb{Q}_p}H^1_f(\mathbb{Q}_p,U[2])=\frac{1}{2}(3g-2)(g+1),
\end{align*}
and relies on the fact that $\mathrm{Fil}^0D_{\mathrm{dR}}(V_pJ)$ is $g$-dimensional. For the $3^{\mathrm{rd}}$ graded piece, the sequence \eqref{cubicDesc} implies that
\begin{align*}
\dim_{\mathbb{Q}_p}D_{\mathrm{dR}}(U[3])&=2g\bigg(\binom{2g}{2}-1\bigg)-\binom{2g}{3} \\
\dim_{\mathbb{Q}_p}\mathrm{Fil}^0D_{\mathrm{dR}}(U[3])&= g\binom{g}{2}-\binom{g}{3},
\end{align*}
from which the result follows.
\end{proof}

\subsection{Consequences assuming the Bloch--Kato conjectures} 

The following consequence of the Bloch--Kato conjectures \cite[Conjecture 5.3(i)]{bloch1990functions} facilitates the conditional finiteness criteria alluded to in the introduction. 

\begin{conjecture}[\texorpdfstring{\cite[Prediction 4.1]{bellBK}}{[Bel09, Prediction 4.1]}]\label{BKConjectures}
Let $K$ be a number field and let $V$ be a geometric $\mathbb{Q}_p$-representation of $G_K$ that is pure of non-negative weight. Then $H^1_f(K,V)=0$. 
\end{conjecture}

The precise relationship between Conjecture \ref{BKConjectures} and the Bloch--Kato conjectures is detailed in the discussion after Conjecture 2.8 in \cite{Cklocus}, for example. 

In the depth 2 case, under the assumption of Conjecture \ref{BKConjectures}, Balakrishnan--Dogra \cite[Lemma 2.6]{QC2} show  that $X(\mathbb{Q}_p)_2$ is finite provided the Mordell--Weil rank of $J$ is strictly less than $g^2+\rho-1$, where $\rho$ denotes the rank of the N\'{e}ron--Severi group of $J$. By a similar calculation, we obtain the following analogue in depth 3.

\begin{lemma}\label{CondDep3}
Assume that Conjecture \ref{BKConjectures} holds. Then the depth $3$ set $X(\mathbb{Q}_p)_3$ is finite whenever 
\begin{align}\label{dep3ineq}
\mathrm{rk}_{\mathbb{Z}} J(\mathbb{Q})<\frac{1}{3}(4g^3+3g^2-4g)+\rho-1.
\end{align}
\end{lemma}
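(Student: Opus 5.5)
The plan is to use the Bloch--Kato Euler characteristic formula (Fontaine--Perrin-Riou) to express $\dim_{\mathbb{Q}_p}H^1_f(\mathbb{Q},U[i])$ for $i=2,3$ in terms of local and archimedean data. Conjecture \ref{BKConjectures} will kill the dual Selmer groups. Substituting into the general inequality \eqref{GeneralDimIneq} with $n=3$ should then reduce the criterion to \eqref{dep3ineq}. This mirrors the depth 2 argument of \cite[Lemma 2.6]{QC2}, with one more graded piece.

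\textbf{Step 1 (Euler characteristic formula).} For a geometric $\mathbb{Q}_p$-representation $V$ of $G_{\mathbb{Q}}$ the formula reads
\begin{align*}
\dim H^1_f(\mathbb{Q},V)-\dim H^1_f(\mathbb{Q},V^*(1)) = \dim H^0(\mathbb{Q},V)-\dim H^0(\mathbb{Q},V^*(1)) + \dim \frac{D_{\mathrm{dR}}(V)}{\mathrm{Fil}^0D_{\mathrm{dR}}(V)} - \dim V^{c=1},
\end{align*}
where $c$ is complex conjugation. As a sanity check, it gives $0=0$ for $V=\mathbb{Q}_p(1)$.

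\textbf{Step 2 (apply the conjecture).} Take $V=U[i]$ with $i=2,3$, which is pure of weight $-i$. Then $V^*(1)$ is pure of weight $i-2\geq 0$, so Conjecture \ref{BKConjectures} gives $H^1_f(\mathbb{Q},V^*(1))=0$. Also $H^0(\mathbb{Q},U[i])=0$ by weights. For $i=3$ the dual has weight $1$, so its $H^0$ vanishes too. For $i=2$, $H^0(\mathbb{Q},U[2]^*(1))$ is the Galois-invariant part of $(\wedge^2H^1_{\textnormal{\'et}}(X_{\overline{\mathbb{Q}}},\mathbb{Q}_p)(1))/\mathbb{Q}_p$. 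By Faltings' theorem (Tate conjecture for abelian varieties) this has dimension $\rho-1$, exactly as in \cite{QC2}. Hence
\begin{align*}
\dim H^1_f(\mathbb{Q},U[i])=\dim D_{\mathrm{dR}}(U[i])/\mathrm{Fil}^0 - \dim U[i]^{c=1} - \delta_{i,2}(\rho-1).
\end{align*}

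\textbf{Step 3 (reduce the inequality).} Substitute this into \eqref{GeneralDimIneq} for $n=3$. The de Rham terms for $i=2,3$ cancel against the right-hand side, leaving
\begin{align*}
\mathrm{rk}_{\mathbb{Z}}J(\mathbb{Q})< g+\dim U[2]^{c=1}+\dim U[3]^{c=1}+\rho-1,
\end{align*}
where the term $g$ is $\dim D_{\mathrm{dR}}(V_pJ)/\mathrm{Fil}^0$.

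\textbf{Step 4 (complex conjugation eigenspaces).} On $V_pJ$, $c$ has $\pm1$-eigenspaces each of dimension $g$, and $c$ acts by $-1$ on $\mathbb{Q}_p(1)$. Using the split sequence \eqref{quadraticDesc}:
\begin{itemize}
\item $\dim(\wedge^2V_pJ)^{+}=2\binom g2=g^2-g$, and since $\mathbb{Q}_p(1)$ lies in the minus part, $\dim U[2]^{+}=g^2-g$ and $\dim U[2]^-=g^2-1$.
\item Hence $\dim(V_pJ\otimes U[2])^+=g(g^2-g)+g(g^2-1)$.
\item $\dim(\wedge^3V_pJ)^+=\binom g3+g\binom g2$, counting triples with an even number of minus vectors.
\end{itemize}
Using \eqref{cubicDesc}, which is exact on eigenspaces, we get $\dim U[3]^+=2g^3-g^2-g-\binom g3-g\binom g2$. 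Summing with $g$ and $g^2-g$ gives $\tfrac13(4g^3+3g^2-4g)$, which yields \eqref{dep3ineq}.

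The main obstacle is Step 2: justifying the Euler characteristic formula in this setting and identifying $\dim H^0(\mathbb{Q},U[2]^*(1))=\rho-1$ via Faltings. I would simply cite the depth 2 argument of \cite[Lemma 2.6]{QC2} for both. The rest is bookkeeping.
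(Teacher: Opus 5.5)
Your proposal is correct and follows essentially the same route as the paper. You use the Fontaine--Perrin-Riou Euler characteristic formula for $U[2]$ and $U[3]$, with Conjecture \ref{BKConjectures} killing $H^1_f(\mathbb{Q},U[i]^*(1))$ and the invariant terms contributing $\rho-1$ and $0$. Your complex-conjugation counts $\dim U[2]^{c=1}=g(g-1)$ and $\dim U[3]^{c=1}=\frac{4}{3}g(g^2-1)$ are correct and match the paper's; the only cosmetic difference is that you substitute into \eqref{GeneralDimIneq} explicitly, whereas the paper writes the resulting lower bound for $\dim H^1_f(\mathbb{Q}_p,U_3)-\dim\mathrm{Sel}(U_3)$ directly.
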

\begin{proof}
If $W$ is a geometric $\mathbb{Q}_p$-representation of $G_K$ that is pure of negative weight, then \cite[Remark II.2.2.2]{FPR} implies that
\begin{align}\label{FPRSES}
\begin{split}
&\dim_{\mathbb{Q}_p}H^1_f(\mathbb{Q}_p,W)-\dim_{\mathbb{Q}_p}H^1_f(\mathbb{Q},W) \\ &= \dim_{\mathbb{Q}_p}H^0(\mathbb{R},W)-\dim_{\mathbb{Q}_p}H^1_f(\mathbb{Q},W^*(1))+\dim_{\mathbb{Q}_p}H^0(\mathbb{Q},W^*(1)).
\end{split}
\end{align}
Conjecture \ref{BKConjectures} implies that $H^1_f(\mathbb{Q},W^*(1))$ is trivial for both $W=U[2]$ and $W=U[3]$. Additionally, $H^0(\mathbb{Q},U[3]^*(1))$ is trivial for weight reasons, while 
\begin{align*}
\dim_{\mathbb{Q}_p}H^0(\mathbb{Q},U[2]^*(1))=\rho -1,
\end{align*}
by the proof of \cite[Lemma 2.5]{QC2}. Finally, the group $H^0(\mathbb{R},W)$ is the subspace of $W$ invariant under the action of complex conjugation, which, together with the isomorphism $V_pJ\simeq \mathbb{Q}_p^g\oplus \mathbb{Q}_p(1)^g$ of $G_{\mathbb{R}}$-modules, yields the following dimensions through simple calculation:
\begin{align*}
\dim_{\mathbb{Q}_p}H^0(\mathbb{R},U[2])=g(g-1), \tab \dim_{\mathbb{Q}_p}H^0(\mathbb{R},U[3])=\frac{4}{3}g(g^2-1). 
\end{align*}
Substituting these simplifications into \eqref{FPRSES}, while assuming \eqref{dep3ineq}, we have
\begin{align*}
\dim H^1_f(\mathbb{Q}_p,U_3)-\dim \mathrm{Sel}(U_3) \geq g-\mathrm{rk}_{\mathbb{Z}}J(\mathbb{Q})+g(g-1)+\rho-1+\frac{4}{3}g(g^2-1)>0,
\end{align*}
which implies finiteness of $X(\mathbb{Q}_p)_3$ by Theorem \ref{KimThm}.
\end{proof}

\begin{remark}\label{BK3remark}
From the proofs of Lemmas \ref{netancor3} and \ref{CondDep3}, it is possible to deduce the conditional dimension of the global Selmer group $H^1_f(\mathbb{Q},U[3])$ under the assumption of Conjecture \ref{BKConjectures}. Specifically, from \eqref{FPRSES}, we obtain 
\begin{align*}
\dim_{\mathbb{Q}_p}H^1_f(\mathbb{Q},U[3])=\dim_{\mathbb{Q}_p}H^1_f(\mathbb{Q}_p,U[3])-\dim_{\mathbb{Q}_p}H^0(\mathbb{R},U[3])=g(g^2-1). 
\end{align*}
\end{remark}

\subsection{2-descent for Bloch--Kato Selmer groups associated to hyperelliptic curves}

Henceforth, we focus on bounding the $\mathbb{Q}_p$-dimension of $H^1_f(\mathbb{Q},U[3])$. As discussed in the introduction, this is achieved by adapting the methods of \cite{bk1,bk2} to the cubic case. Therefore, we briefly recall the general strategy of these methods in obtaining quadratic Chabauty criteria, for which we require the following definition. 

\begin{definition}\label{h1fdef}
Let $K$ be a number field, let $V$ be a finite-dimensional $\mathbb{Q}_p$-vector space with a continuous action of $G_K$, and let $T\subseteq V$ be a $G_K$-stable $\mathbb{Z}_p$-lattice. For a prime $v$ of $K$, define $H^1_f(K_v,T)$ as the preimage of $H^1_f(K_v,V)$ under the natural map in cohomology induced by the inclusion $T\subseteq V$. We define the corresponding global analogue by 
\begin{align}\label{IntegralBK}
H^1_f(K,T)\coloneqq \bigcap_{v} \mathrm{ker}\big(H^1(K,T)\rightarrow H^1(K_v,T)/H^1_f(K_v,T)\big),
\end{align}
where the intersection varies over all primes $v$ of $K$. For a prime $v$ in $K$, define $H^1_f(K_v,T\otimes \mathbb{F}_p)$ as the image of $H^1_f(K_v,T)$ under the injection 
\begin{align*}
H^1(K_v,T)\otimes \mathbb{F}_p\hookrightarrow H^1(K_v,T\otimes \mathbb{F}_p),
\end{align*}
induced by multiplication by $p$ on $T$. The global analogue $H^1_f(K,T\otimes \mathbb{F}_p)$ is then defined analogously to \eqref{IntegralBK}. 
\end{definition}

Consequently, to bound the $\mathbb{Q}_p$-dimension of $H^1_f(K,V)$, it suffices to bound the $\mathbb{F}_p$-dimension of the group $H_f^1(K,T\otimes \mathbb{F}_p)$ associated to a Galois stable lattice $T\subseteq V$. In the depth 2 case, Dogra showed that the $\mathbb{Q}_p$-dimension of $H^1_f(\mathbb{Q},U[2])$ is bounded by the $\mathbb{F}_p$-dimension of $H^1_f(\mathbb{Q},{\wedge^2J[p]})$; this is itself contained in the classes that are unramified outside a suitably chosen set of primes. In the case that $X$ is a hyperelliptic curve and $p=2$, an approximation of $H^1_f(\mathbb{Q},{\wedge^2 J[2]})$ is effectively computable, in part due to the field-theoretic description of the 2-torsion subgroup $J[2]$ of a hyperelliptic Jacobian. 

We exploit this same setup in the cubic case. As such, in the remainder of the paper we consider only $p=2$ and, unless otherwise stated, we assume that $X$ is an odd hyperelliptic curve over a field $K$ of characteristic different from 2. Although we are primarily interested in curves over $\mathbb{Q}$, the following statements hold in this generality.

To describe an analogue of $H^1_f(\mathbb{Q},\wedge^2J[2])$ in the depth 3 case, we require the following basic lemma. Here, and throughout the remainder of this paper, if $f\in K[x]$ is a separable polynomial, then $\mathcal{R}_f$ will denote the $G_K$-set of roots of $f$. 

\begin{lemma}\label{wedgenlemma}
Let $X$ be a hyperelliptic curve over $K$ defined by the equation $y^2=f(x)$, where the degree of $f\in K[x]$ is odd. Then, for each integer $n>1$, there is a split short exact sequence 
\begin{align}\label{wedgenSES}
0\rightarrow {\wedge^n J[2]} \rightarrow {\wedge^n \mathbb{F}_2[\mathcal{R}_f]}&\rightarrow {\wedge^{n-1} J[2]} \rightarrow 0,
\end{align}
induced by the map 
\begin{align}\label{explicitwedgenmap}
\begin{split}
 {\wedge^n \mathbb{F}_2[\mathcal{R}_f]}&\rightarrow {\wedge^{n-1} J[2]} \\
\alpha_1\wedge \cdots \wedge \alpha_n &\mapsto \sum_{i=1}^n \alpha_1 \wedge\cdots \wedge \widehat{\alpha}_i \wedge \cdots \wedge \alpha_n,
\end{split}
\end{align}
where $\widehat{\alpha}_i$ denotes omission of the $i^{\mathrm{th}}$ factor. 
\end{lemma}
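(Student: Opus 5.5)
The plan is to realise the map \eqref{explicitwedgenmap} as contraction against the augmentation, and to get exactness and the splitting from a Galois-stable direct sum decomposition of $\mathbb{F}_2[\mathcal{R}_f]$.

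\textbf{Step 1: the module $J[2]$ and the splitting.} Write $d=\deg f=2g+1$. I would start from the standard description of $J[2]$ for an odd hyperelliptic curve. Let $\varepsilon:\mathbb{F}_2[\mathcal{R}_f]\to\mathbb{F}_2$ be the augmentation sending every root to $1$. Then $J[2]\simeq\ker\varepsilon$, the even-weight subsets of roots, where a pair $\{\alpha,\beta\}$ corresponds to the class of $(\alpha,0)-(\beta,0)$. This is an isomorphism of $\mathbb{F}_2[G_K]$-modules.

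Let $\sigma=\sum_{\alpha\in\mathcal{R}_f}\alpha$. Since $d$ is odd, $\varepsilon(\sigma)=1$, and $\sigma$ is $G_K$-invariant. Hence
\begin{align*}
\mathbb{F}_2[\mathcal{R}_f]=J[2]\oplus\mathbb{F}_2\sigma
\end{align*}
as $G_K$-modules. Taking exterior powers gives a $G_K$-stable decomposition
\begin{align*}
{\wedge^n\mathbb{F}_2[\mathcal{R}_f]}={\wedge^nJ[2]}\oplus\big(\sigma\wedge{\wedge^{n-1}J[2]}\big).
\end{align*}
Here ${\wedge^{n-1}J[2]}\to\sigma\wedge{\wedge^{n-1}J[2]}$ is an isomorphism. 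This is easiest to see by choosing an $\mathbb{F}_2$-basis $\sigma,e_1,\dots,e_{2g}$ with $e_i\in\ker\varepsilon$. In particular ${\wedge^nJ[2]}\to{\wedge^n\mathbb{F}_2[\mathcal{R}_f]}$ is injective. Over a field this is harmless even with the paper's definition of $\wedge^k$ via $I_k$.

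\textbf{Step 2: the map as a contraction.} Define $\iota_\varepsilon:{\wedge^n\mathbb{F}_2[\mathcal{R}_f]}\to{\wedge^{n-1}\mathbb{F}_2[\mathcal{R}_f]}$ by
\begin{align*}
\iota_\varepsilon(v_1\wedge\cdots\wedge v_n)=\sum_i\varepsilon(v_i)\,v_1\wedge\cdots\wedge\widehat{v}_i\wedge\cdots\wedge v_n.
\end{align*}
Since $\varepsilon(\alpha)=1$ for every root $\alpha$, on wedges of roots this is exactly the formula \eqref{explicitwedgenmap}.

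I need $\iota_\varepsilon$ to be well defined modulo $I_n$. If $v_i=v_j$ with $i\neq j$, then every term with $k\notin\{i,j\}$ still contains a repeated factor and so vanishes. The terms $k=i$ and $k=j$ are equal, so they cancel in characteristic $2$. Multilinearity is clear.

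The map $\iota_\varepsilon$ is $G_K$-equivariant because $\varepsilon$ is. It also satisfies $\iota_\varepsilon\circ\iota_\varepsilon=0$: each pair of omitted indices occurs twice, and again characteristic $2$ handles the signs. Consequently the image of $\iota_\varepsilon$ lies in $\ker(\iota_\varepsilon)$ on ${\wedge^{n-1}}$. In the basis of Step 1, $\iota_\varepsilon$ kills every $e_{i_1}\wedge\cdots$ and sends $\sigma\wedge e_{i_1}\wedge\cdots\wedge e_{i_{n-1}}$ to $e_{i_1}\wedge\cdots\wedge e_{i_{n-1}}$. So this kernel is precisely ${\wedge^{n-1}J[2]}$. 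Hence \eqref{explicitwedgenmap} genuinely lands in ${\wedge^{n-1}J[2]}$, with no need to pass to a quotient.

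\textbf{Step 3: exactness and splitting.} The same basis computation shows $\ker\iota_\varepsilon={\wedge^nJ[2]}$ and that $\iota_\varepsilon$ is surjective onto ${\wedge^{n-1}J[2]}$. For a $G_K$-equivariant section I would take $s(w)=\sigma\wedge w$. For $w\in{\wedge^{n-1}J[2]}$ we have $\iota_\varepsilon(w)=0$, and the Leibniz rule gives
\begin{align*}
\iota_\varepsilon(\sigma\wedge w)=\varepsilon(\sigma)w+\sigma\wedge\iota_\varepsilon(w)=w.
\end{align*}
The section $s$ is equivariant since $\sigma$ is Galois invariant.

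The main obstacle is only bookkeeping: checking that the contraction is well defined on the paper's alternating quotient in characteristic $2$, and that the identification of $J[2]$ with even-weight subsets is compatible with the formula. The rest is linear algebra in the adapted basis $\sigma,e_1,\dots,e_{2g}$.
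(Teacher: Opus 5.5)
Your proof is correct and takes essentially the same route as the paper. Both arguments rest on the Galois-stable splitting $\mathbb{F}_2[\mathcal{R}_f]=J[2]\oplus\mathbb{F}_2\sigma$ coming from the split sequence $0\to J[2]\to\mathbb{F}_2[\mathcal{R}_f]\to\mathbb{F}_2\to 0$ (which uses the odd degree), combined with functoriality of $\wedge^n$. The paper splits via the retraction $\wedge^n\varphi$ rather than your section $w\mapsto\sigma\wedge w$, and leaves implicit the check you supply through the contraction $\iota_\varepsilon$: that the explicit formula is well defined, lands in $\wedge^{n-1}J[2]$, and has kernel $\wedge^nJ[2]$.
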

\begin{proof}
The existence of the sequence follows from the well-known split short exact sequence\footnote{Throughout this paper, $J[2]$ will be understood as the submodule of $\mathbb{F}_2[\mathcal{R}_f]$ defined by this exact sequence, and we will rarely require its description in terms of, say, divisors on $X$.}
\begin{align}\label{DefJ2}
0\rightarrow J[2] \rightarrow \mathbb{F}_2[\mathcal{R}_f] \xrightarrow{\mathrm{deg}}\mathbb{F}_2 \rightarrow 0,
\end{align}
which is contained in \cite[Eq. (5)]{bk1} and the discussion thereafter, for example. To see that it splits, note that if 
\begin{align*}
\varphi:\mathbb{F}_2[\mathcal{R}_f]\rightarrow J[2]
\end{align*}
defines a retraction of \eqref{DefJ2}, then $\wedge^n \varphi$ defines a retraction of \eqref{wedgenSES}.
\end{proof}

Using the exact sequence \eqref{wedgenSES} for $n=2$ and $n=3$, we define the following $G_K$-modules that arise in the description of the mod-2 quotient of a Galois stable lattice within $U[3]$. Firstly, we define $\overline{\wedge^2 J[2]}$ to be the cokernel of the map 
\begin{align}\label{nonsplitmapf2}
\begin{split}
\mathbb{F}_2 & \rightarrow \wedge^2J[2]\\
1&\mapsto \sum_{i<j} \alpha_i\wedge \alpha_j.
\end{split}
\end{align}
This notation is chosen in analogy with the notation $\overline{\wedge^2 V}\coloneqq U[2]$ appearing in quadratic Chabauty literature. Next, throughout this paper the quotient $J[2]\otimes \overline{\wedge^2 J[2]}/{\wedge^3 J[2]}$ will be understood as the cokernel of the map 
\begin{align}\label{wedge3map}
\wedge^3J[2]\rightarrow J[2]\otimes \overline{\wedge^2 J[2]},
\end{align}
induced by 
\begin{align}\label{reflate}
\begin{split}
{\wedge^3 \mathbb{F}_2[\mathcal{R}_f]}&\rightarrow \mathbb{F}_2[\mathcal{R}_f]\otimes {\wedge^2 \mathbb{F}_2}[\mathcal{R}_f]\\
\alpha_1\wedge\alpha_2 \wedge \alpha_3 &\mapsto \alpha_1 \otimes \big(\alpha_2\wedge \alpha_3\big)+\alpha_2 \otimes \big(\alpha_3\wedge \alpha_1\big)+\alpha_3 \otimes \big(\alpha_1\wedge \alpha_2\big).
\end{split}
\end{align}
With this notation, we obtain the following crude bound for the desired Bloch--Kato Selmer group.

\begin{proposition}\label{MainApprox}
Let $X$ be a hyperelliptic curve over $\mathbb{Q}$ defined by the equation $y^2=f(x)$, where the degree of $f\in \mathbb{Q}[x]$ is odd, and let $S$ be a finite set of primes that contains $\{2,\infty\}$ and the primes of bad reduction for $X$. Then
\begin{align}\label{approxinclusion}
\dim_{\mathbb{Q}_2}H^1_f\big(\mathbb{Q}, U[3]\big)< \dim_{\mathbb{F}_2} H^1(G_S, J[2]\otimes \overline{\wedge^2 J[2]}/{\wedge^3J[2]}).
\end{align}
\end{proposition}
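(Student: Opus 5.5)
We follow the strategy of Dogra recalled above: pass from $U[3]$ to a Galois-stable $\mathbb{Z}_2$-lattice $T\subseteq U[3]$, reduce mod 2, and then enlarge the resulting mod-2 Selmer group to the full cohomology of $G_S$.

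\textbf{Choice of lattice.} Let $T_2J$ be the integral Tate module, so $T_2J/2\simeq J[2]$. Inside $\wedge^2 V_2J$ take the lattice $\wedge^2T_2J$. Its intersection with $\mathbb{Q}_2(1)$, embedded via the dual of the cup product \eqref{cupprod}, is the saturated line generated by the Weil-pairing element. We set $T_{[2]}$ to be the image of $\wedge^2T_2J$ in $U[2]$. This image is torsion-free, and its reduction mod 2 is identified with the cokernel of \eqref{nonsplitmapf2}, i.e.\ with $\overline{\wedge^2J[2]}$. The one point to verify is that the symplectic form reduces mod 2 to $\sum_{i<j}\alpha_i\wedge\alpha_j$. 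Then put
\[
T\coloneqq \mathrm{coker}\big(\wedge^3T_2J\rightarrow T_2J\otimes T_{[2]}\big),
\]
using the integral version of the map in \eqref{cubicDesc}. We must check that this map has saturated image, so that $T$ is a lattice in $U[3]$. Granting that, right exactness of $\otimes\mathbb{F}_2$ gives
\[
T/2T\simeq J[2]\otimes\overline{\wedge^2J[2]}/{\wedge^3J[2]},
\]
with the map \eqref{wedge3map} induced by \eqref{reflate}.

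\textbf{From $\mathbb{Q}_2$-dimension to $\mathbb{F}_2$-dimension.} Since every prime $\ell\notin S$ is a prime of good reduction, $H^1_f(\mathbb{Q},U[3])$ consists of classes unramified outside $S$, and likewise $H^1_f(\mathbb{Q},T)\subseteq H^1(G_S,T)$. The group $H^1(G_S,T)$ is a finitely generated $\mathbb{Z}_2$-module, and its image in $H^1(G_S,U[3])$ contains a lattice of $H^1_f(\mathbb{Q},U[3])$. Hence
\[
\dim_{\mathbb{Q}_2}H^1_f(\mathbb{Q},U[3])\le \mathrm{rk}_{\mathbb{Z}_2}H^1_f(\mathbb{Q},T)\le \dim_{\mathbb{F}_2}H^1(G_S,T)\otimes\mathbb{F}_2.
\]
The long exact sequence attached to $0\to T\xrightarrow{2}T\to T/2T\to0$ gives an injection $H^1(G_S,T)\otimes\mathbb{F}_2\hookrightarrow H^1(G_S,T/2T)$, whose cokernel is $H^2(G_S,T)[2]$. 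This yields the weak inequality
\[
\dim_{\mathbb{Q}_2}H^1_f(\mathbb{Q},U[3])\le \dim_{\mathbb{F}_2}H^1(G_S,T/2T).
\]

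\textbf{Strictness.} For the strict inequality we look for a nonzero class in $H^1(G_S,T/2T)$ not accounted for by the free part of $H^1_f$. The simplest source is torsion. The sequence also gives $H^0(G_S,T/2T)\twoheadrightarrow H^1(G_S,T)[2]$, since $H^0(G_S,T)=0$ for weight reasons. So if $H^0(G_S,T/2T)\neq0$, then $H^1(G_S,T)$ has nontrivial torsion, and this torsion contributes to $H^1(G_S,T)\otimes\mathbb{F}_2$ beyond the free rank. We therefore try to exhibit an explicit Galois-invariant element of $J[2]\otimes\overline{\wedge^2J[2]}/{\wedge^3J[2]}$. Natural candidates are symmetric expressions in the roots, such as $\sum_{i}\alpha_i\otimes\sum_{j<k}\alpha_j\wedge\alpha_k$ or $\sum_{i\neq j}\alpha_i\otimes(\alpha_i\wedge\alpha_j)$, suitably corrected so they lie in $J[2]\otimes\overline{\wedge^2J[2]}$. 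We then show such an element is nonzero modulo the image of $\wedge^3J[2]$ by a parity or counting argument in the root basis, which is valid for $2g+1\ge5$ roots.

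If the invariant approach fails, the fallback is the Selmer conditions. The local condition at $\infty$, or at $2$, cuts out a proper subspace of $H^1(G_S,T/2T)$ that $H^1_f$ must satisfy, which would also give a strict drop.

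\textbf{Expected main obstacle.} We expect the hardest parts to be the integral bookkeeping — that the saturation claims hold, so that $T/2T$ really is the stated quotient — and the strictness step. The strictness step needs an invariant class that survives the quotient by $\wedge^3J[2]$, and this must be checked carefully with the explicit map \eqref{reflate}.
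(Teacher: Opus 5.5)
\textbf{The strictness step fails, and the one non-formal input to the weak inequality is assumed rather than proved.}

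Your strictness argument does not work. In the generic case $\mathrm{Gal}(f)\simeq S_{2g+1}$, the module $M=J[2]\otimes\overline{\wedge^2J[2]}/{\wedge^3J[2]}$ has no nonzero invariants.
\begin{itemize}
\item By Lemmas \ref{splitlemmaJ2} and \ref{wedge3split}, whose retractions are given by $S_{2g+1}$-equivariant formulas, $M$ is a Galois direct summand of $J[2]\otimes\wedge^2J[2]$.
\item The invariants of the permutation module $\mathbb{F}_2[\mathcal{R}_f]\otimes\wedge^2\mathbb{F}_2[\mathcal{R}_f]$ are spanned by the two orbit sums $A=\sum_{i\neq j}\alpha_i\otimes(\alpha_i\wedge\alpha_j)$ and $B=\sum_{i\notin\{j,k\}}\alpha_i\otimes(\alpha_j\wedge\alpha_k)$.
\item $A$ does not lie in $\mathbb{F}_2[\mathcal{R}_f]\otimes\wedge^2J[2]$, since its image under $\alpha_j\wedge\alpha_k\mapsto\alpha_j+\alpha_k$ in the second factor is nonzero.
\item $B$ and $A+B$ both map to $\omega=\sum_{j<k}\alpha_j\wedge\alpha_k\neq 0$ under $\deg\otimes\mathrm{id}$, so neither lies in $J[2]\otimes\wedge^2\mathbb{F}_2[\mathcal{R}_f]$.
\end{itemize}
Hence $H^0(G_S,M)=0$ and your $H^1(G_S,T)$ (the paper's $\mathcal{T}$) is torsion-free. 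No ``suitably corrected'' symmetric element exists. Your first candidate is $(\sum_i\alpha_i)\otimes\omega$, which has an odd-degree first factor and in any case dies in $\overline{\wedge^2J[2]}$.

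The fallback is not an argument either. A local condition of positive codimension cuts out a proper global subspace only if some class of $H^1(G_S,-)$ localises outside it. At $2$ the codimension is $\dim\mathrm{Fil}^0D_{\mathrm{dR}}(U[3])$. At $\infty$ a condition exists only for the mod-$2$ image, since $H^1(\mathbb{R},U[3])=0$. By Poitou--Tate, that global localisation statement is controlled by a dual Selmer group such as $H^1_f(\mathbb{Q},U[3]^*(1))$. The paper obtains the required surjectivity only under Conjecture \ref{BKConjectures} (Remark \ref{GSBK3}).

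The paper's own proof only bounds $\dim_{\mathbb{Q}_2}H^1_f(\mathbb{Q},U[3])$ by the dimension of the image of $H^1_f(\mathbb{Q},\mathcal{T})$ in $H^1(G_S,M)$, i.e.\ a non-strict inequality. That is all Theorem \ref{mainthm} needs, because Lemma \ref{netancor3} already has a strict hypothesis. You should aim for $\le$.

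For that non-strict inequality your route is the paper's. Your chain of inequalities is fine, including $H^1_f(\mathbb{Q},\mathcal{T})\subseteq H^1(G_S,\mathcal{T})$ and $H^1(G_S,\mathcal{T})\otimes\mathbb{F}_2\hookrightarrow H^1(G_S,M)$. But you grant the one non-formal input: torsion-freeness of $\mathcal{T}=T\otimes\overline{\wedge^2T}/{\wedge^3T}$. This is where the paper's proof does its actual work. Without it, $\mathcal{T}/2\mathcal{T}$ is still $M$ by right exactness, but $\mathcal{T}$ is not a lattice, and passing to $\mathcal{T}/\mathcal{T}_{\mathrm{tors}}$ changes the mod-$2$ reduction.

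The paper proves saturation as follows:
\begin{itemize}
\item It takes $\{e_i,e_{g+i}\}$ dual to a symplectic basis, so that $\overline{\wedge^2T}$ is free on the $e_i\wedge e_j$ with $(i,j)\neq(g,2g)$.
\item It writes out the images of the $e_i\wedge e_j\wedge e_k$. When $\{g,2g\}\subseteq\{i,j,k\}$ there is an extra term $-\sum_{\ell<g}e_i\otimes\overline{e_\ell\wedge e_{\ell+g}}$.
\item It observes that each column contains a $1$ that is the only nonzero entry in its row. This gives a maximal minor of determinant $\pm1$, so all elementary divisors equal $1$.
\end{itemize}
The identification of $\overline{\wedge^2T}/2$ with $\overline{\wedge^2J[2]}$, which you also flag, comes from identifying the reduced cup product with the Weil pairing and dualising.
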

\begin{proof}
To simplify notation, we denote the $\mathbb{Q}_2$-rational and $\mathbb{Z}_2$-integral Tate modules of $J$ by $V\coloneqq V_2J$ and $T \coloneqq T_2J$, respectively. The result follows straightforwardly once we define a Galois stable $\mathbb{Z}_2$-lattice within $U[3]$. 

To that end, first define $\overline{\wedge^2 T}$ as the cokernel of the map dual to the cup product 
\begin{align}\label{integralcup}
\cup:\wedge^2 H^1_{\textnormal{\'{e}t}}(X_{\overline{\mathbb{Q}}},\mathbb{Z}_2)\rightarrow H^2_{\textnormal{\'{e}t}}(X_{\overline{\mathbb{Q}}},\mathbb{Z}_2)\simeq \mathbb{Z}_2(-1).
\end{align}
To see that $\overline{\wedge^2 T}$ itself defines a $\mathbb{Z}_2$-lattice within $\overline{\wedge^2 V}=U[2]$, we note that $\overline{\wedge^2 T}$ admits the following explicit description: if $\{e_i,e_{g+i}\}_{i=1}^g$ is a basis of $T$ that is dual to the symplectic basis of $H^1_{\textnormal{\'{e}t}}(X_{\overline{\mathbb{Q}}},\mathbb{Z}_2)$ (with respect to the cup-product pairing), then $\overline{\wedge^2 T}$ is the cokernel of the map 
\begin{align*}
\mathbb{Z}_2(1) &\rightarrow \wedge^2T \\ 
1 &\mapsto \sum_{i=1}^ge_i\wedge e_{g+i},
\end{align*}
which follows immediately from the definition of a symplectic pairing. In particular, $\overline{\wedge^2 T}$ is torsion-free and the set 
\begin{align}\label{IntermediateBasis}
\{e_i\wedge e_j:1\leq i<j\leq 2g, \ (i,j)\neq (g,2g)\}
\end{align}
constitutes a $\mathbb{Z}_2$-basis. 

Consequently, a canonical candidate for a $G_K$-stable lattice within $U[3]$ is the cokernel $\mathcal{T}\coloneqq T\otimes\overline{\wedge^2 T}/{\wedge^3T}$ of the integral analogue
\begin{align}\label{Z2mapcube}
\begin{split}
{\wedge^3 T}&\rightarrow T\otimes{\wedge^2 T} \\
e_i\wedge e_j \wedge e_k &\mapsto e_i\otimes (e_j \wedge e_k)+ e_j \otimes (e_k\wedge e_i) + e_k \otimes (e_i\wedge e_j).
\end{split}
\end{align}
to the injection in \eqref{cubicDesc}. We need only prove that this cokernel is torsion-free. To this end, given that ${\wedge^3 T}\rightarrow T \otimes \overline{\wedge^2 T}$ is injective, it suffices to show that the $(\mathrm{rk}_{\mathbb{Z}_2}T\otimes \overline{\wedge^2T})\times(\mathrm{rk}_{\mathbb{Z}_2}{\wedge^3 T})$-matrix representing this map has a $(\mathrm{rk}_{\mathbb{Z}_2}{\wedge^3 T}) \times (\mathrm{rk}_{\mathbb{Z}_2}{\wedge^3 T}) $-minor of determinant $\pm1$. If this holds, then, by the theory of Smith Normal Forms, the product of all elementary divisors of the matrix is $1$, from which we conclude that its cokernel is torsion-free. This is straightforward to prove for this particular map, given that each column of the matrix contains a $1$ for which it is the only non-zero entry in its row. 

To this end, via the basis \eqref{IntermediateBasis} of $\overline{\wedge^2 T}$, we fix the following basis of $T\otimes \overline{\wedge^2T}$:
\begin{align*}
\{e_i\otimes \overline{e_j\wedge e_k}:1 \leq i \leq 2g,\ 1\leq j<k\leq 2g \text{ with } (j,k)\neq (g,2g)\}.
\end{align*}
Then the image of $e_i\wedge e_j \wedge e_k\in {\wedge^3 T}$, for $\{g,2g\}\not\subseteq\{i,j,k\}$, under the map \eqref{Z2mapcube} is 
\begin{align*}
e_i\otimes \overline{e_j\wedge e_k}+e_j \otimes \overline{e_k\wedge e_i}+e_k\otimes \overline{e_i\wedge e_j},
\end{align*}
while the image of $e_i\wedge e_g\wedge e_{2g}$ is 
\begin{align*}
e_g\otimes \overline{e_{2g}\wedge e_i}+e_{2g}\otimes \overline{e_i\wedge e_g}-\sum_{\ell =1}^{g-1}e_i\otimes \overline{e_\ell \wedge e_{\ell+g}}.
\end{align*}
Accordingly, with respect to these bases, the matrix representing \eqref{Z2mapcube} is of the desired form. Indeed, the summand $e_j\otimes \overline{e_{j+g}\wedge e_i}$ in the image of $e_i\wedge e_j \wedge e_{j+g}$ does not appear as the summand in the image of another basis element; for the remaining $e_i\wedge e_j \wedge e_k$ not of this form, every summand appears uniquely in its image. We conclude that $\mathcal{T}$ is torsion-free. 

To identify a description of the mod-2 quotient of this lattice, note that the mod-2 analogue of \eqref{integralcup} may be identified with the Weil pairing $\wedge^2 J[2] \rightarrow \mathbb{F}_2$ (see, for instance, \cite[Remark V.2.4(f)]{MilneOGEC}). Consequently, upon dualising we obtain that the mod-2 quotient of $\overline{\wedge^2 T}$ is isomorphic to $\overline{\wedge^2 J[2]}$. Finally, this implies that $J[2]\otimes {\overline{\wedge^2 J[2]}}/{\wedge^3 J[2]}$ is isomorphic to the mod-2 quotient of $T\otimes \overline{\wedge^2T}/{\wedge^3 T}$. 

In summary, the $\mathbb{Q}_2$-dimension of $H^1_f(\mathbb{Q},U[3])$ is bounded by the $\mathbb{F}_2$-dimension of the image of $H^1_f(\mathbb{Q},\mathcal{T})$ in $H^1(G_S,J[2]\otimes {\overline{\wedge^2 J[2]}}/{\wedge^3 J[2]})$. In particular, the cruder bound in \eqref{approxinclusion} holds. 
\end{proof}

The conditional applicability of cubic Chabauty in Lemma \ref{CondDep3} suggests that the rudimentary approximation afforded by Lemma \ref{MainApprox} should already suffice to deduce finiteness of $X(\mathbb{Q}_2)_3$ for examples beyond the quadratic Chabauty method. This is in contrast with the quadratic Chabauty descent papers, which each have to refine the analogous crude approximation to obtain appropriate examples. Accordingly, the subsequent section concerns effectively computing the $\mathbb{F}_2$-dimension of the cohomology group on the right-hand side of \eqref{approxinclusion}.

\begin{remark}\label{GSBK3}
The inequality \eqref{approxinclusion} from Proposition \ref{MainApprox} is strict, and Remark \ref{BK3remark} can be used to deduce the optimal bound. Specifically, \cite[Proposition 3.8]{bloch1990functions} states that $H^1_f(\mathbb{Q}_2,U[3])$ and $H^1_f(\mathbb{Q}_2,U[3]^*(1))$ are exact annihilators under the cup product pairing
\begin{align*}
H^1(\mathbb{Q}_2,U[3])\times H^1(\mathbb{Q}_2,U[3]^*(1))\xrightarrow{\cup} H^2(\mathbb{Q}_2,\mathbb{Q}_2(1))\simeq \mathbb{Q}_2, 
\end{align*}
which yields the following unconditional difference in dimensions:
\begin{align*}
\dim_{\mathbb{Q}_2}H^1(\mathbb{Q}_2,U[3])-\dim _{\mathbb{Q}_2}H^1_f(\mathbb{Q}_2,U[3])=\dim_{\mathbb{Q}_2}H^1_f(\mathbb{Q}_2,U[3]^*(1)).
\end{align*} 
Since $U[3]^*(1)$ is pure of non-zero weight, it follows that 
\begin{align*}
\dim_{\mathbb{Q}_2}H^1_f(\mathbb{Q}_2,U[3]^*(1))=\dim_{\mathbb{Q}_2}D_{\mathrm{dR}}(U[3]^*(1))/\mathrm{Fil}^0D_{\mathrm{dR}}(U[3]^*(1))=\dim_{\mathbb{Q}_2}\mathrm{Fil}^0D_{\mathrm{dR}}(U[3]),
\end{align*}
and this final dimension is computed in the proof of Lemma \ref{netancor3}. By the proof of \cite[Proposition II.2.2.1]{FPR}, Conjecture \ref{BKConjectures} implies that this local disparity lifts globally. Indeed, we obtain a surjection 
\begin{align*}
H^1(G_S,U[3]) \rightarrow \bigoplus_{p \in S}H^1(\mathbb{Q}_p,U[3])/H^1_f(\mathbb{Q}_p,U[3]),
\end{align*}
given that $H^1_f(\mathbb{Q},U[3]^*(1))=0$ under the assumption of the conjecture, where $S$ denotes a finite set of primes containing $\{2,\infty\}$ and the primes of bad reduction for $X$. Consequently, the sharpest bound for $\dim_{\mathbb{Q}_2}H^1_f(\mathbb{Q},U[3])$ obtainable by the methods of this paper is 
\begin{align*}
g(g^2-1)+g\binom{g}{2}-\binom{g}{3} = \frac{4}{3}g(g^2-1). 
\end{align*}
In particular, the sharpest bound we expect for a genus 2 curve is 8.
\end{remark}

In the following section, computing the dimension on the right-hand side of \eqref{approxinclusion} will amount to effectively determining the $\mathbb{F}_2$-dimensions of cohomology groups of the form $H^1(G_{K,S},\mathrm{Ind}_K^L\mathbb{F}_2)$, for $L$ a finite \'{e}tale $K$-algebra and $S$ a finite set of primes in $K$ containing those above $\{2,\infty\}$. To that end, note that we may first identify $H^1(G_{K,S},\mathrm{Ind}_K^L\mathbb{F}_2)$ with the Selmer group $H^1_S(K,\mathrm{Ind}_K^{K_f}\mathbb{F}_2)$ of elements in $H^1(K,\mathrm{Ind}_K^L\mathbb{F}_2)$ that are unramified outside of $S$. Consequently, we define $(L^\times \otimes \mathbb{F}_2)_S$ as the image of $H^1_S(K,\mathrm{Ind}_K^L\mathbb{F}_2)$ under the isomorphism 
\begin{align*}
H^1(K,\mathrm{Ind}_K^L\mathbb{F}_2)\xrightarrow{\sim} L^\times \otimes \mathbb{F}_2,
\end{align*}
which is induced by Shapiro's lemma and Kummer theory. 

\begin{lemma}[\texorpdfstring{\cite[Lemma 12.6]{poonsch}}{}]\label{Scohomolem}
Let $K$ be a number field, let $L$ be a finite \'{e}tale $K$-algebra, and let $S$ be a finite set of primes of $K$ that contains the primes above $2$ and the archimedean places. Then we have the short exact sequence 
\begin{align*}
0\rightarrow \mathcal{O}_{L,S}^\times \otimes \mathbb{F}_2 \rightarrow (L^\times \otimes \mathbb{F}_2)_S\rightarrow \mathrm{Cl}_S(L)[2] \rightarrow 0.
\end{align*}
\end{lemma}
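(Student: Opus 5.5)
The proof reduces the statement to the classical description of $H^1$ of $\mu_2$ over the ring of $S$-integers, applied componentwise to $L$. First, write $L=\prod_i L_i$ as a product of number fields finite over $K$. Shapiro's lemma gives $H^1(K,\mathrm{Ind}_K^L\mathbb{F}_2)\simeq\bigoplus_i H^1(L_i,\mathbb{F}_2)$. The condition of being unramified outside the primes above $S$ is compatible with this decomposition, because restriction to an inertia group at a prime $v$ of $K$ decomposes, via Mackey's formula, into restrictions to inertia groups at the primes $w\mid v$ of each $L_i$. Kummer theory ($\mathbb{F}_2\simeq\mu_2$) then identifies $H^1(L_i,\mathbb{F}_2)$ with $L_i^\times\otimes\mathbb{F}_2$. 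Hence it suffices to treat the case where $L$ is a single number field, with $S$ replaced by the set $S_L$ of primes of $L$ lying above $S$. This set still contains all primes above $2$ and all archimedean places.

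Next, I characterise the classes unramified outside $S_L$. A class $a\in L^\times\otimes\mathbb{F}_2$ corresponds to $L(\sqrt a)/L$. For a prime $w\notin S_L$ we have $w\nmid 2$, so this extension is unramified at $w$ exactly when $v_w(a)$ is even. This is the point where the hypothesis $2\in S$ is used: at primes above $2$ the even-valuation criterion fails, and those primes are excluded from the condition. Therefore
\[(L^\times\otimes\mathbb{F}_2)_S=\{a\in L^\times/L^{\times2} : v_w(a)\equiv 0 \bmod 2 \text{ for all } w\notin S_L\}.\]

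Now define the map to the class group. For such an $a$, the fractional ideal of $\mathcal{O}_{L,S}$ generated by $a$ has even exponents everywhere, so it equals $\mathfrak{b}^2$ for a unique ideal $\mathfrak{b}$. Send $a\mapsto[\mathfrak{b}]\in\mathrm{Cl}_S(L)[2]$. Changing $a$ by a square $c^2$ replaces $\mathfrak{b}$ by $c\mathfrak{b}$, so the map is well defined on square classes.

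It remains to check exactness at the three places.
\begin{itemize}
\item[(a)] Surjectivity: if $\mathfrak{b}^2=(a)$, then $a$ has even valuations outside $S_L$ and maps to $[\mathfrak{b}]$.
\item[(b)] Exactness in the middle: if $[\mathfrak{b}]$ is trivial, say $\mathfrak{b}=(c)$, then $a=uc^2$ with $u\in\mathcal{O}_{L,S}^\times$. Conversely, every $S$-unit maps to $[\mathcal{O}_{L,S}]=0$.
\item[(c)] Injectivity of $\mathcal{O}_{L,S}^\times\otimes\mathbb{F}_2\to L^\times\otimes\mathbb{F}_2$: if $u=c^2$ with $u$ an $S$-unit, then $v_w(c)=0$ for all $w\notin S_L$, so $c$ is itself an $S$-unit.
\end{itemize}

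None of these steps is a real obstacle. The step needing the most care is the compatibility of the unramified-outside-$S$ condition with Shapiro's lemma in the non-field case. I handle it by restricting to decomposition and inertia subgroups and using Mackey's formula, which matches the primes $v$ of $K$ with the primes $w\mid v$ of each factor $L_i$.
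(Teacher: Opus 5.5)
Your proposal is correct and is essentially the standard argument behind Poonen--Schaefer \cite{poonsch}, which the paper cites without proof, remarking only that it extends from $L=K_f$ to a general \'{e}tale algebra. You reduce to the factors $L_i$ via Shapiro's lemma, characterise the unramified-outside-$S$ classes by even valuations at primes $w\notin S_L$, send $a$ to $[\mathfrak{b}]$ where $a\mathcal{O}_{L,S}=\mathfrak{b}^2$, and your Mackey-formula check that the local conditions at $v$ split over the primes $w\mid v$ of each $L_i$ is exactly what justifies that extension.
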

Note that, while \cite[Proposition 12.6]{poonsch} is stated for $L=K_f$, its proof holds to the generality of this lemma. Here we are assuming that if we have the decomposition $L=\prod L_i$, then the ring of $S$-integers and the $S$-class group of the \'{e}tale algebra $L$ are defined as 
\begin{align*}
\mathcal{O}_{L,S}\coloneqq \prod_i\mathcal{O}_{L_i,S}, \tab \mathrm{Cl}_S(L)\coloneqq \prod_i\mathrm{Cl}_S(L_i),
\end{align*}
where $\mathcal{O}_{L_i,S}$ (resp. $\mathrm{Cl}_S(L_i)$) denotes the $S$-integers (resp. $S$-class group) of the field $L_i$.

\section{Bounds in depth 3}\label{MainCohomSection}

Although this section concerns obtaining the $\mathbb{F}_2$-dimension of $H^1(G_S,J[2]\otimes \overline{\wedge^2 J[2]}/{\wedge^3 J[2]})$ to exploit Proposition \ref{MainApprox}, the required preliminary results in Galois cohomology hold for more general fields. Accordingly, we continue to assume that $K$ is a field of characteristic different from $2$ and $X$ is an odd hyperelliptic curve over $K$. We reiterate that the assumption on the degree of the defining polynomial of $X$ is essential, given that most results in this section do not necessarily hold in the even degree case. 

\begin{lemma}\label{wedge3split}
The short exact sequence
\begin{align}\label{FinallySplit}
0\rightarrow \wedge^3J[2] \rightarrow J[2]\otimes \overline{\wedge^2 J[2]}\rightarrow J[2]\otimes \overline{\wedge^2 J[2]}/{\wedge^3 J[2]} \rightarrow 0,
\end{align}
induced by \eqref{wedge3map}, is split. 
\end{lemma}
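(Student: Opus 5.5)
The plan is to write down an explicit, choice-free $G_K$-equivariant retraction of \eqref{wedge3map}. The natural candidate is the wedge product. On the level of $\mathbb{F}_2[\mathcal{R}_f]$, consider
\begin{align*}
\mu:\mathbb{F}_2[\mathcal{R}_f]\otimes \wedge^2\mathbb{F}_2[\mathcal{R}_f]\rightarrow \wedge^3\mathbb{F}_2[\mathcal{R}_f], \tab a\otimes (b\wedge c)\mapsto a\wedge b\wedge c.
\end{align*}
This map is visibly $G_K$-equivariant, since $G_K$ acts by permuting $\mathcal{R}_f$. Composing it with \eqref{reflate} sends $\alpha_1\wedge\alpha_2\wedge\alpha_3$ to $3\,\alpha_1\wedge\alpha_2\wedge\alpha_3=\alpha_1\wedge\alpha_2\wedge\alpha_3$ in characteristic $2$, so $\mu\circ\eqref{reflate}=\mathrm{id}$.

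Next I would descend $\mu$ to $J[2]$. By Lemma \ref{wedgenlemma} and its proof, applying $\wedge^n$ to the split inclusion $J[2]\subseteq \mathbb{F}_2[\mathcal{R}_f]$ identifies $\wedge^nJ[2]$ with a $G_K$-stable direct summand of $\wedge^n\mathbb{F}_2[\mathcal{R}_f]$. Therefore $\mu$ restricts to a map
\begin{align*}
J[2]\otimes\wedge^2J[2]\rightarrow\wedge^3J[2].
\end{align*}
This restriction still satisfies $\mu\circ(\text{map } \wedge^3J[2]\to J[2]\otimes\wedge^2J[2])=\mathrm{id}$, because \eqref{wedge3map} is by definition induced by \eqref{reflate}.

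The remaining and main step is to pass to the quotient $\overline{\wedge^2J[2]}$. This requires $\mu$ to vanish on $J[2]\otimes\omega$, where $\omega$ denotes the image of $1$ under \eqref{nonsplitmapf2}. In other words, we need $x\wedge\omega=0$ in $\wedge^3J[2]$ for every $x\in J[2]$. I would first test this directly: take $x=\alpha_a+\alpha_b$, which span $J[2]$, and expand $x\wedge\omega$ in $\wedge^3\mathbb{F}_2[\mathcal{R}_f]$. Then use the relation that a degree-zero sum is what lies in $J[2]$, together with the odd number $2g+1$ of roots, to see whether the terms cancel in pairs mod $2$.

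If this identity fails, I would correct $\mu$ by a term built from the Weil pairing $e:\wedge^2J[2]\to\mathbb{F}_2$, which is equivariant and satisfies $e(\omega)=g \bmod 2$. Specifically, I would set $\mu'(x\otimes y)=x\wedge y - \lambda(y)\,(x\wedge\omega)$ for an equivariant functional $\lambda$ with $\lambda(\omega)=1$. Alternatively, I would look for an equivariant complement to $\mathbb{F}_2\omega$ in $\wedge^2J[2]$ coming from the explicit root description in the odd-degree case. In either variant, the final check is the same: the image of \eqref{wedge3map} is spanned by elements $\sum x\otimes(y\wedge z)$, and on these the correction term must vanish. I expect this well-definedness on $\overline{\wedge^2J[2]}$ to be the genuine obstacle, and the place where the odd-degree hypothesis is used.
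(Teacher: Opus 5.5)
Your overall strategy (an explicit equivariant retraction built from the roots that kills $J[2]\otimes\omega$) is the paper's, and your first two steps are fine: $\mu$ is equivariant, $\mu\circ\eqref{reflate}=\mathrm{id}$, and $\mu$ restricts to $J[2]\otimes\wedge^2J[2]\to\wedge^3J[2]$. The step you postpone, however, is the whole content of the lemma, and neither of your routes through it works.

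\textbf{The direct test fails.} With $\omega=\sum_{i<j}\alpha_i\wedge\alpha_j$, the element $(\alpha_a+\alpha_b)\wedge\omega$ is the sum of all triples $\alpha_r\wedge\alpha_s\wedge\alpha_t$ meeting $\{a,b\}$ in exactly one index. This is nonzero once $\deg f\ge 5$. Equivalently, in a symplectic basis $u_k,v_k$ of $J[2]$ one has $\omega=\sum_k u_k\wedge v_k$, and $L\colon x\mapsto x\wedge\omega$ is injective $J[2]\to\wedge^3J[2]$ for $g\ge 2$.

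\textbf{The correction fails as well.} The map $\mu'(x\otimes y)=x\wedge y-\lambda(y)\,x\wedge\omega$ needs an equivariant $\lambda$ with $\lambda(\omega)=1$, i.e.\ a splitting of \eqref{nonsplitmapf2}. Your ``equivariant complement'' variant is the same requirement.
\begin{itemize}
\item For even $g$ no such $\lambda$ exists in general. If the Galois group of $f$ is $S_{2g+1}$, then $J[2]$ is absolutely irreducible, so the only nonzero equivariant functional on $\wedge^2J[2]$ is the Weil pairing $e$. But $e$ is the restriction of $\alpha_i\wedge\alpha_j\mapsto 1$, so $e(\omega)=\binom{2g+1}{2}\equiv g\equiv 0 \pmod 2$. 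This is exactly why Lemma~\ref{splitlemmaJ2} splits $J[2]\to J[2]\otimes\wedge^2J[2]$ instead when $\deg f\equiv 1\pmod 4$.
\item For odd $g$ you can take $\lambda=e$, and $\mu'$ then kills $J[2]\otimes\omega$, but it stops being a retraction. Let $\iota\colon\wedge^3J[2]\to J[2]\otimes\wedge^2J[2]$ be induced by \eqref{reflate}, and let $\Lambda(a\wedge b\wedge c)=e(b,c)a+e(c,a)b+e(a,b)c$. Then $\mu'\circ\iota=\mathrm{id}-L\circ\Lambda$, and $L\Lambda(u_1\wedge v_1\wedge u_2)=L(u_2)=\sum_{k\neq 2}u_2\wedge u_k\wedge v_k\neq 0$.
\end{itemize}

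\textbf{What is missing is a genuinely different retraction.} The paper defines one directly on roots. For distinct $i,j,k$ it sets $\varphi(\alpha_i\otimes(\alpha_j\wedge\alpha_k))=\alpha_i\wedge\omega+\sum_{\{r,s,t\}\cap\{i,j,k\}=\emptyset}\alpha_r\wedge\alpha_s\wedge\alpha_t$, and sets $\varphi=0$ when the indices are not distinct. Write $T$ for $\alpha_r\wedge\alpha_s\wedge\alpha_t$ when $T=\{r,s,t\}$. Parity counting then gives two facts.
\begin{itemize}
\item $\varphi\circ\eqref{reflate}$ sends $\alpha_1\wedge\alpha_2\wedge\alpha_3$ to itself plus all triples meeting $\{1,2,3\}$ in at most one index. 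This fixes each spanning element $\sum_{T\subset Q,\,|T|=3}T$ (with $|Q|=4$) of $\wedge^3J[2]$.
\item $\varphi(\alpha_a\otimes\omega)=\binom{2g}{2}\,\alpha_a\wedge\omega+\binom{2g-3}{2}\sum_{T\not\ni a}T=\binom{2g-3}{2}\sum_T T$. This is independent of $a$ because $\binom{2g}{2}\equiv\binom{2g-3}{2}\pmod 2$, so $\varphi$ vanishes on $J[2]\otimes\omega$.
\end{itemize}
That parity coincidence is where the odd degree is actually used, and nothing in your proposal supplies a substitute for it.
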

\begin{proof}
To show that the sequence is split, it suffices to construct a map
\begin{align*}
\mathbb{F}_2[\mathcal{R}_f]\otimes \wedge^2 \mathbb{F}_2[\mathcal{R}_f]\rightarrow {\wedge^3 \mathbb{F}_2[\mathcal{R}_f]}
\end{align*}
that both defines a retraction of 
\begin{align}\label{retractionmap}
{\wedge^3J[2]}\rightarrow J[2]\otimes {\wedge^2 J[2]}
\end{align}
and vanishes on the image of $J[2]$ under the map induced by \eqref{nonsplitmapf2}. To that end, for distinct $i,j,k$, consider the element
\begin{align*}
D_{i,j,k}\coloneqq \sum_{m,n}\alpha_i\wedge \alpha_m\wedge \alpha_n + \sum_{r,s,t \notin \{i,j,k\}}\alpha_r\wedge \alpha_s\wedge\alpha_t \in {\wedge^3 \mathbb{F}_2[\mathcal{R}_f]},
\end{align*}
from which we define the map 
\begin{align*}
\varphi: \mathbb{F}_2[\mathcal{R}_f]\otimes {\wedge^2 \mathbb{F}_2[\mathcal{R}_f]}&\rightarrow {\wedge^3\mathbb{F}_2[\mathcal{R}_f]}\\
\alpha_i\otimes (\alpha_j \wedge \alpha_k) &\mapsto \begin{cases}
D_{i,j,k} &\text{if $i,j,k$ are distinct;}\\
0 &\text{otherwise.}
\end{cases}
\end{align*}
Without loss of generality, to describe the composition of \eqref{reflate} with $\varphi$, we consider the image of the single triple $\alpha_1\wedge \alpha_2 \wedge \alpha_3 \in {\wedge^3 \mathbb{F}_2[\mathcal{R}_f]}$; rather than working directly with the resulting sum, it is enough to count the number of times a given $\alpha_i\wedge \alpha_j \wedge \alpha_k$ appears as a summand: 
\begin{itemize}
    \item If $\{i,j,k\}\cap \{1,2,3\}=\emptyset$, then $\alpha_i\wedge \alpha_j\wedge \alpha_k$ appears three times. 
    \item If $\# (\{i,j,k\}\cap \{1,2,3\})=1$, then $\alpha_i\wedge \alpha_j\wedge \alpha_k$ appears once. 
    \item If $\# (\{i,j,k\}\cap \{1,2,3\})=2$, then $\alpha_i\wedge \alpha_j\wedge \alpha_k$ appears twice.
    \item The triple $\alpha_1\wedge \alpha_2 \wedge \alpha_3$ appears 3 times.
\end{itemize}
In summary, we obtain that the above composition defines the map 
\begin{align}\label{anotheralmostretract}
\begin{split}
{\wedge^3\mathbb{F}_2[\mathcal{R}_f]}&\rightarrow {\wedge^3\mathbb{F}_2[\mathcal{R}_f]}\\
\alpha_1\wedge \alpha_2 \wedge \alpha_3 &\mapsto \alpha_1\wedge \alpha_2 \wedge \alpha_3 +\sum_{\substack{i<j<k \\ |\{i,j,k\}\cap \{1,2,3\}|\leq 1}} \alpha_{i}\wedge \alpha_j\wedge \alpha_k. 
\end{split}
\end{align}
To show that this restricts to identity on ${\wedge^3 J[2]}$, first note that the $n=3$ case of \eqref{wedgenSES} implies that ${\wedge^3 J[2]}$ is the $\mathbb{F}_2$-span of elements of the form 
\begin{align*}
\alpha_{i_1}\wedge \alpha_{i_2} \wedge \alpha_{i_3} +\alpha_{i_1}\wedge \alpha_{i_2} \wedge \alpha_{i_4} +\alpha_{i_1} \wedge \alpha_{i_3} \wedge \alpha_{i_4} +\alpha_{i_2} \wedge \alpha_{i_3}\wedge\alpha_{i_4}.
\end{align*}
Through another count of the frequency a given $\alpha_i\wedge \alpha_j \wedge \alpha_k$ appears in the image of this $\mathbb{F}_2$-basis element under \eqref{anotheralmostretract}, we obtain that this composition is indeed identity on ${\wedge^3 J[2]}$. We conclude that $\varphi$ defines the desired retraction of \eqref{retractionmap}. 

It remains to show that $\varphi$ vanishes on the image of $J[2]$ under the map 
\begin{align}\label{imageJ2map}
\begin{split}
\mathbb{F}_2[\mathcal{R}_f]&\rightarrow \mathbb{F}_2[\mathcal{R}_f] \otimes {\wedge^2 J[2]}\\
\alpha &\mapsto \sum_{i<j}\alpha\otimes (\alpha_i\wedge \alpha_j).
\end{split}
\end{align}
First note that the element $\alpha_1\in \mathbb{F}_2[\mathcal{R}_f]$ is mapped under the composition of \eqref{imageJ2map} with $\varphi$ to the following sum:
\begin{align}\label{awfsum}
\binom{\deg f -1}{2}\bigg(\sum_{m<n}\alpha_1\wedge\alpha_m\wedge \alpha_n\bigg)+\sum_{\substack{i<j\\i,j\neq 1}}\sum_{r,s,t \notin \{1,i,j\} } \alpha_r\wedge\alpha_s\wedge \alpha_t.
\end{align}
A given $\alpha_r\wedge \alpha_s\wedge \alpha_t$ (for $r,s,t \neq 1$) appears in this right-hand sum for any $i,j \notin \{1,r,s,t\}$, and hence \eqref{awfsum} simplifies to
\begin{align*}
&\binom{\deg f -1}{2}\bigg(\sum_{m<n}\alpha_1\wedge\alpha_m\wedge \alpha_n\bigg)+ \binom{\deg f -4}{2}\bigg(\sum_{r,s,t \neq 1}\alpha_r\wedge \alpha_s \wedge \alpha_t\bigg)\\
&=\binom{\deg f -4}{2} \bigg(\sum_{r,s,t}\alpha_r\wedge \alpha_s\wedge \alpha_t\bigg),
\end{align*}
where this final equality follows as the two binomial coefficients have the same parity whenever $\deg f$ is odd. Consequently, $\varphi$ vanishes on the image of even degree sums of roots of $f$ under \eqref{imageJ2map}, as required.
\end{proof}

Consequently, to compute the dimension of the right-hand side of \eqref{approxinclusion}, it suffices to compute the dimensions of $H^1(G_S,{\wedge^3 J[2]})$ and $H^1(G_S,J[2]\otimes \overline{\wedge^2 J [2]})$ independently. Towards either calculation, we require some notation: for each integer $1\leq n\leq \deg f$, denote by $K_{f,n}$ the following \'{e}tale algebra:
\begin{align*}
K_{f,n}\coloneqq K[t_1,\dots,t_n][\tfrac{1}{t_i-t_j}:i\neq j]/(f(t_1),\dots,f(t_n)).
\end{align*}
If $0\leq k\leq n$, then 
\begin{align*}
K_f^{(n-k,k)}\subseteq K_{f,n}
\end{align*} 
will denote the subalgebra fixed by the action of the permutation group $S_{n-k}$ on $\{t_1,\dots,t_{n-k}\}$. For simplicity, set $K_{f}^{(n)}\coloneqq K_{f}^{(n,0)}$. In particular, \cite[Proposition 1]{bk1} shows that 
\begin{align}\label{WedgeSqDesc}
\begin{split}
H^1(K,\wedge^2J[2])&\simeq \mathrm{ker}\big(H^1(K,\wedge^2 \mathbb{F}_2[\mathcal{R}_f])\rightarrow H^1(K,\mathbb{F}_2[\mathcal{R}_f])\big)\\
&\simeq \mathrm{ker}\big(K_f^{(2),\times}\otimes \mathbb{F}_2 \rightarrow K_f^\times \otimes \mathbb{F}_2\big),
\end{split}
\end{align}
where the final map factors through the norm map associated to the extension of \'{e}tale algebras $K_{f,2}/K_f$, and $\mathcal{R}_f$ continues to denote the set of roots of $f$. In addition, these \'{e}tale algebras describe cohomology groups associated to the higher exterior powers as detailed in the following lemma.

\begin{lemma}\label{cubInd}
For $1\leq n \leq \deg f$, we have the isomorphism 
\begin{align*}
H^1(K,\wedge^n \mathbb{F}_2[\mathcal{R}_f])\simeq K_f^{(n),\times}\otimes\mathbb{F}_2.
\end{align*}
\end{lemma}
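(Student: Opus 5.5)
Because we are over $\mathbb{F}_2$, signs do not matter, so $\wedge^n \mathbb{F}_2[\mathcal{R}_f]$ (in the sense of the paper's convention, $N^{\otimes n}/I_n$) has as $\mathbb{F}_2$-basis the classes $\alpha_{i_1}\wedge\cdots\wedge\alpha_{i_n}$ with $i_1<\cdots<i_n$. Thus it is the free $\mathbb{F}_2$-vector space on the set of $n$-element subsets of $\mathcal{R}_f$. The plan is to show that $G_K$ permutes this basis (with no sign twist) exactly as it permutes these subsets. We then identify the $G_K$-set of $n$-element subsets of $\mathcal{R}_f$ with the $G_K$-set of $K$-algebra homomorphisms $K_f^{(n)}\to K^{\mathrm{sep}}$. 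From this we get
\begin{align*}
\wedge^n \mathbb{F}_2[\mathcal{R}_f]\simeq \mathrm{Ind}_K^{K_f^{(n)}}\mathbb{F}_2 .
\end{align*}
Finally, Shapiro's lemma together with Kummer theory (Hilbert 90 on each factor field, using $\mathrm{char}\,K\neq 2$) gives $H^1(K,\mathrm{Ind}_K^{L}\mathbb{F}_2)\simeq L^\times\otimes\mathbb{F}_2$ for any finite \'{e}tale $L$. This is the same isomorphism already used before Lemma \ref{Scohomolem}.

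The first step is to describe $K_{f,n}$. Its $K^{\mathrm{sep}}$-points, i.e.\ $\mathrm{Hom}_K(K_{f,n},K^{\mathrm{sep}})$, are the ordered $n$-tuples $(t_1,\dots,t_n)$ of roots of $f$ with $t_i\neq t_j$ for $i\neq j$; inverting $t_i-t_j$ forces distinctness. This set carries a $G_K$-action commuting with the $S_n$-action that permutes coordinates. Since $K_{f,n}$ is finite \'{e}tale (it is a localisation of $K_f^{\otimes n}$), the invariant subalgebra $K_f^{(n)}=K_{f,n}^{S_n}$ is again finite \'{e}tale. Because the order of $S_n$ need not be invertible in $K$, I would argue via Galois descent rather than through any averaging. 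After base change to $K^{\mathrm{sep}}$, $K_{f,n}$ becomes the algebra of functions on the finite set of ordered distinct tuples. Taking $S_n$-invariants commutes with this flat base change. The invariants are then the functions on the orbit set, namely the unordered $n$-subsets of $\mathcal{R}_f$. Hence $\mathrm{Hom}_K(K_f^{(n)},K^{\mathrm{sep}})$ is $G_K$-equivariantly the set of $n$-element subsets of $\mathcal{R}_f$.

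The second step is to combine these. The standard equivalence between finite \'{e}tale $K$-algebras and finite $G_K$-sets identifies $\mathrm{Ind}_K^{L}\mathbb{F}_2$ with $\mathbb{F}_2[\mathrm{Hom}_K(L,K^{\mathrm{sep}})]$. Applying this with $L=K_f^{(n)}$ yields the isomorphism of $\mathbb{F}_2[G_K]$-modules above, and hence the lemma. This is consistent with \eqref{WedgeSqDesc}, which is the case $n=2$.

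The main obstacle is the descent in the first step: checking that the $S_n$-invariants of $K_{f,n}$ really correspond to unordered subsets when $n!$ is not invertible in $K$. I would handle it with the flat base change argument just given. Invariants are a kernel of a difference of maps, so they commute with the flat base change $K\to K^{\mathrm{sep}}$, and over $K^{\mathrm{sep}}$ the computation is purely combinatorial.
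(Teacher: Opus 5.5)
Your proposal is correct and follows the paper's proof. Both identify $\wedge^n\mathbb{F}_2[\mathcal{R}_f]$ with the permutation module on $n$-element subsets of $\mathcal{R}_f$, identify those subsets $G_K$-equivariantly with $\mathrm{Hom}_K(K_f^{(n)},K^{\mathrm{sep}})$ as the $S_n$-quotient of $\mathrm{Hom}_K(K_{f,n},K^{\mathrm{sep}})$, and conclude by Shapiro's lemma and Kummer theory (the paper cites \cite[Lemma 3]{bk1} for this last step). The one difference is in how the bijection $\mathrm{Hom}_K(K_{f,n},K^{\mathrm{sep}})/S_n\to\mathrm{Hom}_K(K_f^{(n)},K^{\mathrm{sep}})$ is obtained: the paper uses surjectivity of restriction plus a cardinality count, whereas your base change of the $S_n$-invariants to $K^{\mathrm{sep}}$ proves it directly and also supplies the equality $\dim_K K_f^{(n)}=\binom{\deg f}{n}$ that the paper's count tacitly relies on.
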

\begin{proof}
Suppose $\mathcal{R}\simeq \mathcal{R}_f$ is the $G_K$-set $\mathrm{Hom}_K(K_f,K^\mathrm{sep})$, and denote by $\mathcal{R}^{(n)}$ the set of unordered $n$-tuples of distinct elements in $\mathcal{R}$. Then we have the following isomorphism of $G_K$-modules:
\begin{align*}
\wedge^n \mathbb{F}_2[\mathcal{R}]&\xrightarrow{\sim}\mathbb{F}_2[\mathcal{R}^{(n)}]\\
s_1\wedge \dots \wedge s_n &\mapsto \{s_1,\dots,s_n\}.
\end{align*}
Consequently, by \cite[Lemma 3]{bk1}, the result follows if we establish the isomorphism of $G_K$-sets
\begin{align}\label{s(3)isom}
{\mathcal{R}^{(n)}}\simeq \mathrm{Hom}_K(K_f^{(n)},K^{\mathrm{sep}}).
\end{align}
This can be shown by an identical argument to the proof of \cite[Lemma 2]{bk1}: we include it here for the sake of completeness. 

If we denote by $\mathcal{R}_{(n)}$ the ordered $n$-tuples of distinct elements of $\mathcal{R}$, then the injection of $K_{f,n}$ into $K_f^{{\otimes n}}$ yields the following isomorphism of $G_K$-sets:
\begin{align}\label{s3equiv}
\begin{split}
\mathcal{R}_{(n)}&\xrightarrow{\sim} \mathrm{Hom}_K(K_{f,n},K^{\mathrm{sep}})\\
(s_1,\dots,s_n)&\mapsto s_1\otimes \dots \otimes s_n.
\end{split}
\end{align}
Clearly, there is a canonical action of $S_n$ on both sides of \eqref{s3equiv}, under which this isomorphism is equivariant. Since the quotient of $\mathcal{R}_{(n)}$ by this action is isomorphic to $\mathcal{R}^{(n)}$, we establish \eqref{s(3)isom} if the map 
\begin{align}\label{pullbackmap}
\mathrm{Hom}_K(K_{f,n},K^{\mathrm{sep}})/S_n\rightarrow \mathrm{Hom}_K(K_f^{(n)},K^{\mathrm{sep}}),
\end{align}
induced by pulling back the inclusion $K_f^{(n)}\hookrightarrow K_{f,n}$, is an isomorphism. Finally, this follows as the pullback is surjective and factors through the quotient by $S_n$, and the orders of the domain and codomain of \eqref{pullbackmap} coincide.
\end{proof}

As an immediate consequence of Lemma \ref{cubInd}, it is possible to obtain the dimension of $H^1(G_S,{\wedge^3 J[2]})$. Indeed, the $n=3$ case of Lemma \ref{wedgenlemma} implies that 
\begin{align}\label{wedge3J2dim}
\dim_{\mathbb{F}_2}H^1(G_S,\wedge^3 J[2])=\dim_{\mathbb{F}_2}H^1(G_S,\wedge^3 \mathbb{F}_2[\mathcal{R}_f])- \dim_{\mathbb{F}_2}H^1(G_S,\wedge^2 J[2]).
\end{align}
The dimensions on the right-hand side of this equality are effectively computable by Lemma \ref{cubInd}, the description \eqref{WedgeSqDesc} and Lemma \ref{Scohomolem}. 

It remains to obtain methods to effectively compute the dimension of $H^1(G_S,J[2]\otimes \overline{\wedge^2 J[2]})$; this is the object of the following two lemmas. 

\begin{lemma}\label{simple21isom}
We have the isomorphism 
\begin{align*}
H^1(K,\mathbb{F}_2[\mathcal{R}_f]\otimes {\wedge^2 J[2]})\simeq K_f^{(1,2),\times}\otimes \mathbb{F}_2.
\end{align*}
\end{lemma}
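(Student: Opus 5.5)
The plan is to reduce to a permutation module and then apply the same Shapiro--Kummer argument as in Lemma \ref{cubInd}. Let $\mathcal{R}=\mathcal{R}_f$. The starting point is the $G_K$-isomorphism
\begin{align*}
\wedge^2\mathbb{F}_2[\mathcal{R}]\simeq \mathbb{F}_2[\mathcal{R}^{(2)}]
\end{align*}
used in the proof of Lemma \ref{cubInd}. With it, $M\coloneqq \mathbb{F}_2[\mathcal{R}]\otimes\wedge^2\mathbb{F}_2[\mathcal{R}]$ becomes the permutation module on the $G_K$-set $\mathcal{R}\times\mathcal{R}^{(2)}$.

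First decompose this $G_K$-set. The pairs $(a,\{b,c\})$ with $a\notin\{b,c\}$ form a $G_K$-stable subset, which I denote $\mathcal{R}^{(1,2)}$. The pairs $(a,\{a,b\})$ form the complementary stable subset, and the map $(a,\{a,b\})\mapsto (a,b)$ identifies it with the set $\mathcal{R}_{(2)}$ of ordered pairs of distinct roots. Hence
\begin{align*}
M=\mathbb{F}_2[\mathcal{R}^{(1,2)}]\oplus\mathbb{F}_2[\mathcal{R}_{(2)}]
\end{align*}
as $G_K$-modules.

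Next, tensor the split sequence \eqref{wedgenSES} for $n=2$ with $\mathbb{F}_2[\mathcal{R}]$. This exhibits $\mathbb{F}_2[\mathcal{R}]\otimes\wedge^2J[2]$ as the kernel of the surjection $\mathrm{id}\otimes\partial\colon M\to \mathbb{F}_2[\mathcal{R}]\otimes J[2]$, where $\partial(b\wedge c)=b+c$. I claim that $\mathrm{id}\otimes\partial$ restricts to an isomorphism
\begin{align*}
\mathbb{F}_2[\mathcal{R}_{(2)}]\to\mathbb{F}_2[\mathcal{R}]\otimes J[2], \qquad a\otimes(a\wedge b)\mapsto a\otimes(a+b).
\end{align*}
Indeed, for fixed $a$ the $\deg f-1$ elements $a+b$ with $b\neq a$ form a basis of $J[2]$, so this restriction sends a basis to a basis. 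It follows that $\ker(\mathrm{id}\otimes\partial)\cap\mathbb{F}_2[\mathcal{R}_{(2)}]=0$, and a dimension count gives $M=\ker(\mathrm{id}\otimes\partial)\oplus\mathbb{F}_2[\mathcal{R}_{(2)}]$. Therefore
\begin{align*}
\mathbb{F}_2[\mathcal{R}]\otimes\wedge^2J[2]\simeq M/\mathbb{F}_2[\mathcal{R}_{(2)}]\simeq\mathbb{F}_2[\mathcal{R}^{(1,2)}]
\end{align*}
as $G_K$-modules, where the last isomorphism comes from the direct sum decomposition of $M$. Concretely, the isomorphism is the projection of $\mathbb{F}_2[\mathcal{R}^{(1,2)}]$ to $M/\mathbb{F}_2[\mathcal{R}_{(2)}]$, composed with the inverse of the projection from the kernel. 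This step is the main obstacle: $\wedge^2J[2]$ is not itself a permutation module, so one must check that $\mathbb{F}_2[\mathcal{R}_{(2)}]$ is really a $G_K$-stable complement to the kernel. The basis argument above is what I would rely on for this.

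Finally, by \cite[Lemma 3]{bk1}, that is, Shapiro's lemma together with Kummer theory, it remains to establish an isomorphism of $G_K$-sets
\begin{align*}
\mathcal{R}^{(1,2)}\simeq\mathrm{Hom}_K(K_f^{(1,2)},K^{\mathrm{sep}}).
\end{align*}
I would copy the argument of Lemma \ref{cubInd} with $n=3$, replacing $S_3$ by the subgroup $S_2$ acting on $\{t_1,t_2\}$. The starting point is \eqref{s3equiv}, which identifies $\mathcal{R}_{(3)}$ equivariantly with $\mathrm{Hom}_K(K_{f,3},K^{\mathrm{sep}})$, and this identification is $S_2$-equivariant. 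Since $\mathcal{R}_{(3)}/S_2\simeq\mathcal{R}^{(1,2)}$, it suffices to show that the pullback map
\begin{align*}
\mathrm{Hom}_K(K_{f,3},K^{\mathrm{sep}})/S_2\to\mathrm{Hom}_K(K_f^{(1,2)},K^{\mathrm{sep}})
\end{align*}
is a bijection. It is surjective, because $K_{f,3}$ is an étale algebra over the subalgebra $K_f^{(1,2)}$, and it factors through the quotient by $S_2$. Both sides have cardinality $\binom{\deg f}{2}(\deg f-2)$, since $K_f^{(1,2)}$ is the invariant subalgebra of the free $S_2$-action and so has degree $[K_{f,3}:K]/2$. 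Hence the map is a bijection, which gives the stated isomorphism.
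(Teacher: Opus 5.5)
Your proof is correct, and it reaches the key module isomorphism by a different route from the paper.

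\textbf{The paper's route.} It applies the projection formula $(\mathrm{Ind}^G_H M)\otimes N\simeq \mathrm{Ind}^G_H(M\otimes \mathrm{Res}^G_H N)$ to rewrite $\mathbb{F}_2[\mathcal{R}_f]\otimes\wedge^2J[2]$ as $\mathrm{Ind}_K^{K_f}\mathrm{Res}^{K_f}_K\wedge^2J[2]$. It then uses that over $K_f$ one root $a$ becomes rational. Hence $J[2]$, restricted to $G_{K_f}$, is the permutation module $\mathrm{Ind}_{K_f}^{K_{f,2}}\mathbb{F}_2$ on the remaining roots, via $b\mapsto a+b$. Its exterior square is $\mathrm{Ind}_{K_f}^{K_f^{(1,2)}}\mathbb{F}_2$ by \cite[Lemmas 1 and 2]{bk1}, and Shapiro--Kummer finishes.

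\textbf{Your route.} You stay over $K$ and exhibit $\mathbb{F}_2[\mathcal{R}_{(2)}]$ as a $G_K$-stable complement to $\ker(\mathrm{id}\otimes\partial)$. Unwinding your composite, $a\otimes(b\wedge c)$ goes to $a\otimes\bigl((a+b)\wedge(a+c)\bigr)$. This is exactly the isomorphism implicit in the paper: your basis $\{a+b\}_{b\neq a}$ of $J[2]$ is the paper's identification $\mathrm{Res}_{K_f}J[2]\simeq\mathrm{Ind}_{K_f}^{K_{f,2}}\mathbb{F}_2$ in disguise.

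\textbf{What each buys.} The paper's argument is shorter and explains conceptually why a permutation module appears: adjoining one root trivialises the degree condition defining $J[2]$. Yours is self-contained and works entirely with $G_K$-sets, with no projection formula or induction from étale algebras. It also proves the identification $\mathcal{R}^{(1,2)}\simeq\mathrm{Hom}_K(K_f^{(1,2)},K^{\mathrm{sep}})$, which the paper delegates to \cite{bk1} and ``the definition of $K_f^{(1,2)}$''.

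\textbf{One small justification.} Surjectivity of the pullback map comes from the inclusion $K_f^{(1,2)}\subseteq K_{f,3}$ being injective and integral, so every factor of $K_f^{(1,2)}$ lies under some factor of $K_{f,3}$. Embeddings then extend along finite separable extensions; étaleness alone would not give this. Alternatively, $(K_{f,3}\otimes_K K^{\mathrm{sep}})^{S_2}\simeq\prod_{\mathcal{R}_{(3)}/S_2}K^{\mathrm{sep}}$ gives the degree count and the bijection at once.
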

\begin{proof}
This is a consequence of the following basic result: if $G$ is a group, $H\subseteq G$ is a subgroup, $M$ is an $H$-module and $N$ is a $G$-module, then $(\mathrm{Ind}^G_HM)\otimes N \simeq \mathrm{Ind}^G_H(M\otimes \mathrm{Res}^G_HN)$ as $G$-modules. Applying this to the case of interest yields
\begin{align*}
\mathbb{F}_2[\mathcal{R}_f]\otimes {\wedge^2 J[2]}\simeq \mathrm{Ind}_K^{K_f}\mathbb{F}_2 \otimes \wedge^2 J[2] \simeq \mathrm{Ind}_K^{K_f}\mathrm{Res}_K^{K_f}{\wedge^2 J[2]}\simeq \mathrm{Ind}_K^{K_f}( \wedge^2 \mathrm{Ind}_{K_f}^{K_{f,2}}\mathbb{F}_2).
\end{align*}
The lemma then follows from the isomorphism 
\begin{align*}
\wedge^2 \mathrm{Ind}_{K_f}^{K_{f,2}}\mathbb{F}_2\simeq \mathrm{Ind}_{K_f}^{K_f^{(1,2)}}\mathbb{F}_2,
\end{align*}
which is implied by \cite[Lemmas 1 and 2]{bk1} and the definition of $K_f^{(1,2)}$. 
\end{proof}

\begin{lemma}\label{splitlemmaJ2}
The short exact sequence 
\begin{align}\label{biggernonsplit}
0\rightarrow J[2] \rightarrow J[2] \otimes \wedge^2 J[2] \rightarrow J[2] \otimes \overline{\wedge^2 J[2]} \rightarrow 0,  
\end{align}
induced by \eqref{nonsplitmapf2}, is split. 
\end{lemma}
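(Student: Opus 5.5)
The plan is to write down an explicit $G_K$-equivariant retraction
\begin{align*}
\psi: J[2]\otimes \wedge^2 J[2]\rightarrow J[2]
\end{align*}
of the map $v\mapsto v\otimes \omega$, where $\omega\coloneqq\sum_{i<j}\alpha_i\wedge\alpha_j$ is the image of $1$ under \eqref{nonsplitmapf2}. The map $\psi$ will be a contraction against the Weil pairing. Note that one cannot simply tensor a retraction of \eqref{nonsplitmapf2} with $J[2]$. Indeed, \eqref{nonsplitmapf2} is itself split only when the Weil pairing of $\omega$ is nonzero, i.e.\ when $g$ is odd. So the extra factor of $J[2]$ must be used.

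Concretely, I will work inside $\mathbb{F}_2[\mathcal{R}_f]$ with the standard dot product $\langle \alpha_i,\alpha_j\rangle=\delta_{ij}$. This is $G_K$-invariant, since Galois permutes the roots, and its restriction to $J[2]$ (even-degree sums) is the Weil pairing, as in the footnote description \eqref{DefJ2} and \cite{bk1}. Define
\begin{align*}
\tilde\psi:\mathbb{F}_2[\mathcal{R}_f]\otimes\wedge^2\mathbb{F}_2[\mathcal{R}_f]&\rightarrow \mathbb{F}_2[\mathcal{R}_f],\\
v\otimes(a\wedge b)&\mapsto \langle v,a\rangle b+\langle v,b\rangle a.
\end{align*}
The first steps are routine checks on $\tilde\psi$.
\begin{itemize}
\item[(a)] It is well defined on the exterior square, since $a\wedge a\mapsto 2\langle v,a\rangle a=0$.
\item[(b)] It is $G_K$-equivariant, because the pairing is.
\item[(c)] It sends $J[2]\otimes\wedge^2J[2]$ into $J[2]$. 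If $a,b$ have even degree, then $\langle v,a\rangle b+\langle v,b\rangle a$ is a combination of even-degree elements.
\end{itemize}
Let $\psi$ be the restriction of $\tilde\psi$ to $J[2]\otimes\wedge^2 J[2]$.

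The key computation is $\psi(v\otimes\omega)=v$ for $v\in J[2]$. First I compute $\tilde\psi$ on a single root:
\begin{align*}
\tilde\psi(\alpha_k\otimes\omega)=\sum_{j\neq k}\alpha_j=\sigma+\alpha_k, \qquad \sigma\coloneqq\sum_i\alpha_i.
\end{align*}
By linearity, for $v=\sum_{k\in T}\alpha_k$ with $|T|$ even,
\begin{align*}
\tilde\psi(v\otimes\omega)=|T|\,\sigma+v=v.
\end{align*}
One point needs care here. The element $\omega$ in \eqref{nonsplitmapf2} lives in $\wedge^2J[2]$, i.e.\ it is the image of $\sum_{i<j}\alpha_i\wedge\alpha_j$ under the retraction $\wedge^2\varphi$ from Lemma \ref{wedgenlemma}, or equivalently a lift of it. So I must check that the relevant representative of $v\otimes\omega$ in $\mathbb{F}_2[\mathcal{R}_f]\otimes\wedge^2\mathbb{F}_2[\mathcal{R}_f]$ is handled consistently by $\tilde\psi$.

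I expect this compatibility to be the main obstacle. The cleanest fix is to use the intrinsic description of $\omega$ as the dual of the Weil pairing (as in the proof of Proposition \ref{MainApprox}). In a symplectic basis $\{e_i,f_i\}$ of $J[2]$ we have $\omega=\sum_i e_i\wedge f_i$, and then
\begin{align*}
\psi(v\otimes\omega)=\sum_i\big(\langle v,e_i\rangle f_i+\langle v,f_i\rangle e_i\big)=v,
\end{align*}
which is the symplectic expansion of $v$ (signs are irrelevant mod $2$). This argument is basis-free in its conclusion, and $\psi$ is equivariant, so it gives the required Galois-equivariant retraction. The root computation above then serves as an explicit check. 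Hence \eqref{biggernonsplit} splits.
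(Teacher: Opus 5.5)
Your proof is correct, but it takes a different route from the paper's. The paper splits into cases according to $\deg f \bmod 4$. For $g$ odd it tensors the degree retraction $\alpha_i\wedge\alpha_j\mapsto 1$ of \eqref{nonsplitmapf2} with $J[2]$. For $g$ even it uses the map $\alpha_i\otimes(\alpha_j\wedge\alpha_k)\mapsto\alpha_i+\alpha_j+\alpha_k$ on distinct triples (and zero otherwise), verified by a root count.

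Your contraction $\psi$ gives a single retraction for every $g$, and it also explains the paper's case split:
\begin{itemize}
\item the degree map restricts to the Weil pairing $e$ on $\wedge^2J[2]$;
\item the restriction of the paper's second map to $J[2]\otimes\wedge^2J[2]$ equals $\psi+\mathrm{id}_{J[2]}\otimes e$;
\item $e(\omega)=\binom{2g+1}{2}\equiv g \pmod 2$.
\end{itemize}
So the paper's retractions are $\mathrm{id}\otimes e$ for $g$ odd and $\psi+\mathrm{id}\otimes e$ for $g$ even, while yours is $\psi$ throughout. The paper's version yields the slightly stronger fact that \eqref{nonsplitmapf2} itself splits when $g$ is odd. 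Yours is uniform and intrinsic, being contraction with the symplectic form.

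Two minor remarks. First, the compatibility issue you flag is not real. Since $\deg f$ is odd, $\sum_{i<j}\alpha_i\wedge\alpha_j$ already lies in $\wedge^2J[2]\subseteq\wedge^2\mathbb{F}_2[\mathcal{R}_f]$: its image $\sum_{i<j}(\alpha_i+\alpha_j)$ under the map of Lemma \ref{wedgenlemma} vanishes, because each root occurs $\deg f-1$ times. Hence $\psi(v\otimes\omega)=\tilde\psi(v\otimes\omega)$ on the nose, and your root computation is by itself a complete proof. The symplectic-basis step can therefore be dropped. That step relies on the identification $\omega=\sum_i e_i\wedge f_i$, which the paper only uses implicitly; in fact the root computation implies that identification, by nondegeneracy of $e$.

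Second, your aside that \eqref{nonsplitmapf2} splits only when $g$ is odd holds for large Galois image but not for every curve. For instance, if $f$ splits completely over $K$, then $J[2]$ is a trivial Galois module and the sequence splits for any $g$.
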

\begin{proof}
If $\deg f\equiv 3 \textrm{ (mod $4$)}$, then the map 
\begin{align*}
\mathrm{deg}:{\wedge^2 \mathbb{F}_2}[\mathcal{R}_f]\rightarrow \mathbb{F}_2,\tab \alpha_i\wedge \alpha_j &\mapsto 1
\end{align*}
induces a retraction of \eqref{nonsplitmapf2}, given that $\binom{\deg f}{2}$ is odd. Consequently, \eqref{biggernonsplit} is split in this case.

Hence, assume that $\deg f \equiv 1 \textrm{ (mod $4$)}$. A retraction of \eqref{biggernonsplit} is induced by the following map:
\begin{align}\label{retractionJ2}
\begin{split}
\mathbb{F}_2[\mathcal{R}_f] \otimes {\wedge^2 \mathbb{F}_2[\mathcal{R}_f]}&\rightarrow \mathbb{F}_2[\mathcal{R}_f]\\
\alpha_i\otimes (\alpha_j\wedge \alpha_k)&\mapsto \begin{cases}
\alpha_i+\alpha_j+\alpha_k &\text{if $i,j,k$ are distinct;}\\
0 & \text{otherwise.}
\end{cases}
\end{split}
\end{align}
The image of $\alpha_1\in \mathbb{F}_2[\mathcal{R}_f]$ under the composition of \eqref{imageJ2map} with \eqref{retractionJ2} is the sum 
\begin{align*}
\sum_{\substack{i<j\\i,j\neq 1}}\alpha_1+\alpha_i+\alpha_j = \alpha_1+\sum_i\alpha_i,
\end{align*}
given that $\binom{\deg f-1}{2}$ is even in this case. Consequently, this composition is identity on the even degree sums of roots of $f$, as desired. 
\end{proof}

As a consequence of Lemma \ref{splitlemmaJ2}, we obtain the following equality of dimensions: 
\begin{align*}
\dim_{\mathbb{F}_2}H^1(G_S,J[2]\otimes \overline{\wedge^2 J[2]})&=\dim_{\mathbb{F}_2}H^1(G_S,J[2]\otimes{\wedge^2 J[2]})-\dim_{\mathbb{F}_2}H^1(G_S,J[2]).
\end{align*}
This, in turn, is precisely 
\begin{align}\label{dimforthm}
\dim_{\mathbb{F}_2}H^1(G_S,\mathbb{F}_2[\mathcal{R}_f]\otimes{\wedge^2 J[2]})-\dim_{\mathbb{F}_2}H^1(G_S,\wedge^2J[2])-\dim_{\mathbb{F}_2}H^1(G_S,J[2]),
\end{align}
given the split short exact sequence 
\begin{align}\label{J2otimeswedgej2SES}
0\rightarrow J[2] \otimes {\wedge^2 J[2]} \rightarrow \mathbb{F}_2[\mathcal{R}_f] \otimes {\wedge^2 J[2]}\rightarrow \wedge^2 J[2] \rightarrow 0,
\end{align}
induced by \eqref{DefJ2}. 

\begin{remark}
Although the $\mathbb{F}_2$-dimensions of $H^1(G_S,{\wedge^3J[2]})$ and $H^1(G_S,J[2]\otimes\overline{\wedge^2 J[2]})$ alone suffice for the purpose of computing the dimension of $H^1(G_S,J[2]\otimes \overline{\wedge^2 J[2]}/{\wedge^3J[2]})$, each of these cohomology groups admit a field-theoretic description in terms of the finite \'{e}tale algebras above; we collect them in this remark. 
\begin{enumerate}
    \item Consider the following diagram: 
    \begin{center}
    \begin{tikzcd}
    H^1(K,\wedge^3 \mathbb{F}_2[\mathcal{R}_f])\arrow{r}\arrow{d} & H^1(K,\mathbb{F}_2[\mathcal{R}_f]\otimes \wedge^2 \mathbb{F}_2[\mathcal{R}_f])\arrow{r} \arrow{d}& H^1(K,\wedge^2 \mathbb{F}_2[\mathcal{R}_f])\arrow{d}\\
    K_f^{(3),\times}\otimes \mathbb{F}_2 \arrow{r} & K_{f,2}^\times \otimes \mathbb{F}_2 \oplus K_f^{(1,2),\times}\otimes \mathbb{F}_2 \arrow{r} & K_f^{(2),\times} \otimes \mathbb{F}_2,
    \end{tikzcd}
    \end{center}
    where each of the maps are as follows:
    \begin{itemize}
        \item the vertical maps are the isomorphisms in Lemma \ref{cubInd}, Lemma \ref{simple21isom} and \eqref{WedgeSqDesc}, respectively;
        \item the first map in the bottom row is induced by the inclusion $K_f^{(3)}\hookrightarrow K_f^{(1,2)}$, and the second is the norm map; and
        \item the first map in the top row is induced by \eqref{reflate}, while the second is obtained from the map 
        \begin{align*}
        \mathbb{F}_2[\mathcal{R}_f] \otimes\wedge^2  \mathbb{F}_2[\mathcal{R}_f]
        &\longrightarrow \wedge^2 \mathbb{F}_2[\mathcal{R}_f] \\
        \alpha_1 \otimes  (\alpha_2 \wedge \alpha_3) 
        &\longmapsto  \alpha_2\wedge \alpha_3.
        \end{align*} 
    \end{itemize}
    Analogous to the proof of \cite[Lemma 6]{bk1}, commutativity of the left-hand (resp. right-hand) square above follows from Lemma 4 (resp. Lemma 5) from \cite{bk1}. This commutative diagram, coupled with the $n=3$ case of Lemma \ref{wedgenlemma}, implies that
    \begin{align*}
    H^1(K,\wedge^3J[2])\simeq \mathrm{ker}\big(K_f^{(3),\times}\otimes \mathbb{F}_2 \rightarrow K_f^{(2),\times}\otimes \mathbb{F}_2\big),
    \end{align*}
    where the map on the right-hand side is the composition 
    \begin{align*}
    K_f^{(3),\times}\otimes \mathbb{F}_2 \rightarrow K_f^{(1,2),\times}\otimes \mathbb{F}_2 \xrightarrow{\mathrm{Nm}}  K_f^{(2),\times}\otimes \mathbb{F}_2,
    \end{align*}
    induced by the inclusion of $K_f^{(3)}$ into $K_f^{(1,2)}$.

    A suitable modification of this argument implies the more general statement that $H^1(K,{\wedge^n J[2]})$ is isomorphic to the kernel of the composite map
    \begin{align*}
    K_f^{(n),\times}\otimes \mathbb{F}_2\rightarrow K_f^{(1,n-1),\times}\otimes \mathbb{F}_2\xrightarrow{\mathrm{Nm}} K_f^{(n-1),\times}\otimes \mathbb{F}_2,
    \end{align*}
    for $n>3$. This could be advantageous, for instance, in putative methods to develop analogous explicit descent techniques for higher graded pieces of the filtration on the fundamental group, as dictated by their descriptions in \cite[Corollary 8.3]{Hain1995InfinitesimalPO}.
    \item Exploiting the same lemmas \cite[Lemmas 4 and 5]{bk1} yields a field-theoretic description of $H^1(K,J[2]\otimes\overline{\wedge^2 J[2]})$. Indeed, the sequence \eqref{J2otimeswedgej2SES} implies that we have the isomorphism 
    \begin{align*}
    H^1(K,J[2]\otimes \wedge^2J[2])\simeq \mathrm{ker}\big(K_f^{(1,2),\times}\otimes \mathbb{F}_2 \xrightarrow{\mathrm{Nm}}K_f^{(2),\times}\otimes \mathbb{F}_2\big),
    \end{align*}
    while the cokernel of the map 
    \begin{align*}
    H^1(K,\mathbb{F}_2[\mathcal{R}_f])\rightarrow H^1(K,\mathbb{F}_2[\mathcal{R}_f]\otimes {\wedge^2 J[2]}),
    \end{align*}
    associated to \eqref{imageJ2map}, is induced by the inclusion $K_f\subseteq K_f^{(1,2)}$ of \'{e}tale algebras. 
\end{enumerate}
\end{remark}

The main result now follows immediately from the theory presented thus far. Prior to its statement, however, we define the following constant:
\begin{align}\label{dXSdef}
d_{X,S}\coloneqq \dim_{\mathbb{F}_2}H^1(G_S,\mathbb{F}_2[\mathcal{R}_f]\otimes {\wedge^2 J[2]})-\sum_{n=1}^3\dim_{\mathbb{F}_2}H^1(G_S,{\wedge^nJ[2]}),
\end{align}
where $H^1(G_S,\wedge^nJ[2])$ for $n=1$ is understood as $H^1(G_S,J[2])$. We will use the notation $d_X\coloneqq d_{X,S}$ whenever the set of primes $S$ is unambiguous.

\begin{theorem}\label{mainthm}
Let $X$ be a hyperelliptic curve over $\mathbb{Q}$ defined by the equation $y^2=f(x)$, where the degree of $f\in \mathbb{Q}[x]$ is odd. Let $S$ be a finite set of primes containing $\{2,\infty\}$ and the primes of bad reduction for $X$. Additionally, suppose that the following inequality holds:
\begin{align*}
\dim_{\mathbb{Q}_2}H^1_f(\mathbb{Q},U[2])+d_{X,S}<\frac{1}{6}(g+1)(2g+1)(7g-6)-\mathrm{rk}_{\mathbb{Z}}J(\mathbb{Q}).
\end{align*}
Then the depth $3$ Chabauty--Kim set $X(\mathbb{Q}_2)_3$ is finite. 
\end{theorem}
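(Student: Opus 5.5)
The plan is to combine Lemma \ref{netancor3} with Proposition \ref{MainApprox}, and to identify the right-hand side of \eqref{approxinclusion} with $d_{X,S}$ using the two splitting lemmas. Take $p=2$. This is a prime of bad reduction, but it lies in $S$ by hypothesis, and the Chabauty--Kim setup only needs good reduction at $p$ in so far as the paper's framework uses it. Following the paper's convention, I will use Lemma \ref{netancor3} with $p=2$ directly. That lemma says it suffices to prove
\begin{align*}
\dim_{\mathbb{Q}_2}H^1_f(\mathbb{Q},U[2])+\dim_{\mathbb{Q}_2}H^1_f(\mathbb{Q},U[3])<\frac{1}{6}(g+1)(2g+1)(7g-6)-\mathrm{rk}_{\mathbb{Z}}J(\mathbb{Q}).
\end{align*}
Given the hypothesis of the theorem, this reduces to showing $\dim_{\mathbb{Q}_2}H^1_f(\mathbb{Q},U[3])\leq d_{X,S}$. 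The strict inequality of Proposition \ref{MainApprox} gives more than we need.

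First I apply Proposition \ref{MainApprox}, which bounds $\dim_{\mathbb{Q}_2}H^1_f(\mathbb{Q},U[3])$ by
\begin{align*}
\dim_{\mathbb{F}_2}H^1(G_S,J[2]\otimes\overline{\wedge^2J[2]}/\wedge^3J[2]).
\end{align*}
Next, by Lemma \ref{wedge3split} the sequence \eqref{FinallySplit} splits as $G_S$-modules. Cohomology is additive on direct sums, so this dimension equals
\begin{align*}
\dim H^1(G_S,J[2]\otimes\overline{\wedge^2J[2]})-\dim H^1(G_S,\wedge^3J[2]).
\end{align*}
The same additivity applied to the split sequences \eqref{biggernonsplit} (Lemma \ref{splitlemmaJ2}) and \eqref{J2otimeswedgej2SES} (split because \eqref{DefJ2} splits) gives
\begin{align*}
\dim H^1(G_S,J[2]\otimes\overline{\wedge^2J[2]})=\dim H^1(G_S,\mathbb{F}_2[\mathcal{R}_f]\otimes\wedge^2J[2])-\dim H^1(G_S,\wedge^2J[2])-\dim H^1(G_S,J[2]),
\end{align*}
which is \eqref{dimforthm}. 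Substituting this in and comparing with \eqref{dXSdef}, the right-hand side of \eqref{approxinclusion} is exactly $d_{X,S}$.

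Combining these steps, the hypothesis together with $\dim H^1_f(\mathbb{Q},U[3])<d_{X,S}$ yields the inequality required by Lemma \ref{netancor3}, and hence $X(\mathbb{Q}_2)_3$ is finite.

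The only delicate point is bookkeeping: the splittings in Lemmas \ref{wedge3split} and \ref{splitlemmaJ2} are stated over $G_K$, and I need them as $G_S$-modules. This holds because all the modules involved are unramified outside $S$ ($2$ and the bad primes, which include the primes dividing the discriminant of $f$), so the retractions, which are defined combinatorially on roots, are $G_S$-equivariant. With that checked, the rest is a direct substitution.
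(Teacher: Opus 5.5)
Your proposal is correct and is the paper's proof: Proposition \ref{MainApprox} bounds $\dim_{\mathbb{Q}_2}H^1_f(\mathbb{Q},U[3])$, Lemmas \ref{wedge3split} and \ref{splitlemmaJ2} with \eqref{dimforthm} identify that bound with $d_{X,S}$, and Lemma \ref{netancor3} concludes. Your check that the splittings hold over $G_S$, which the paper leaves implicit, is right because $\mathbb{Q}(\mathcal{R}_f)=\mathbb{Q}(J[2])$ is unramified outside $S$ by N\'eron--Ogg--Shafarevich; the reason is not that every prime dividing $\mathrm{disc}(f)$ is bad, which fails for non-minimal models.

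One aside is mistaken but harmless: $2$ need not be a prime of bad reduction for $X$, since only the odd-degree model $y^2=f(x)$ is forced to be bad at $2$. The setup of Section \ref{CKSection} tacitly takes $p=2$ to be a good prime, as it is for the curve in Theorem \ref{rank4thm}.
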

\begin{proof}
By Lemma \ref{wedge3split}, Lemma \ref{splitlemmaJ2} and the dimension calculation \eqref{dimforthm}, it follows that $d_{X,S}$ is precisely the $\mathbb{F}_2$-dimension of $H^1(G_S,J[2]\otimes \overline{\wedge^2 J[2]}/{\wedge^3 J[2]})$, which, given the choice of $S$, is a bound for the $\mathbb{Q}_2$-dimension of $H^1_f(\mathbb{Q},U[3])$ by Proposition \ref{MainApprox}. The result then follows from Lemma \ref{netancor3}.
\end{proof}

We reiterate that the methods of \cite{bk1,bk2,1stpaper} can be used to obtain a computable upper bound for the $\mathbb{Q}_2$-dimension of the Bloch--Kato Selmer group $H^1_f(\mathbb{Q},U[2])$. Additionally, Lemma \ref{Scohomolem} ensures that $d_{X,S}$ is itself effectively computable. Specifically, each summand in the definition of $d_{X,S}$ can be computed as follows:
\begin{itemize}
    \item The dimension of $H^1(G_S,\mathbb{F}_2[\mathcal{R}_f] \otimes {\wedge^2 J[2]})$ can be computed from Lemma \ref{simple21isom}.
    \item The dimension of $H^1(G_S,{\wedge^3 J[2]})$ can be computed using Lemma \ref{cubInd}, via the simplification \eqref{wedge3J2dim}.
    \item The dimensions of $H^1(G_S,\wedge^2 J[2])$ and $H^1(G_S,J[2])$ follow from known results (the isomorphism \eqref{WedgeSqDesc} and the sequence \eqref{DefJ2}). 
\end{itemize}
In the final section, we demonstrate $d_{X,S}$ is not only effectively computable, but computable in practice for low genus curves. 

\section{A genus 2 example}

For a given curve $X$ and a set of primes $S$, the primary computational obstruction to determining $d_{X,S}$ is computing the $S$-class group of $\mathbb{Q}_f^{(1,2)}$, an \'{e}tale algebra of degree $g(4g^2-1)$. Accordingly, in this section we focus on examples of genus 2 curves that satisfy the main finiteness criterion. Fortunately, there are straightforward simplifications in the genus 2 case, which we collect in the first subsection, and these will aid in the computation of $d_{X,S}$.

All computations in the remainder of this paper were implemented in \texttt{Magma} \cite{Magma}.

\subsection{Simplifications in genus 2}

For simplicity, assume that $f$ and $K$ are such that $K_f^{(1,2)}$ is a field. This will not be restrictive for our purposes as the example in the following subsection satisfies this for $K=\mathbb{Q}$. 

The first simplification arises as $J[2]$ is 4-dimensional in the genus 2 case, from which we obtain the isomorphism ${\wedge^3 J[2]}\simeq J[2]$; in particular, the dimensions of $H^1(G_S,{\wedge^3 J[2]})$ and $H^1(G_S,J[2])$ coincide. A further simplification comes from the fact that, in the genus 2 case, the field $K_f^{(1,2)}$ is a quadratic extension. To elaborate, let $g\in K_f[x]$ be the quartic polynomial defined by the factorisation $f(x)=(x-\alpha_1)g(x)$ over $K_f\simeq K(\alpha_1)$, and denote by $h\in K_f[x]$ the cubic resolvent of $g$. Let $K_{f,h}\coloneqq (K_f)_h$ denote the resulting degree 15 extension, which is contained in $K_f^{(1,2)}$ given that the roots of $h$ are of the form $\alpha_2\alpha_3+\alpha_4\alpha_5$ for distinct roots $\alpha_i$ of $g$. Finally, denote by $c\in K_{f,h}^\times\otimes \mathbb{F}_2$ the class for which $K_f^{(1,2)}\simeq K_{f,h}(\sqrt{c})$. 

The following basic lemma outlines the role of $K_{f,h}$ in circumventing the calculation of the class group of $K_f^{(1,2)}$:

\begin{lemma}\label{d1lemma}
Let $S$ be a finite set of primes in $K$ containing primes above $\{2,\infty\}$. Then we have the following inequality:
\begin{align}\label{lem21ineq}
\dim_{\mathbb{F}_2}H^1(G_{K,S},\mathbb{F}_2[\mathcal{R}_f]\otimes \wedge^2 J[2])\leq 2\big(\mathrm{rk}_{\mathbb{Z}}\mathcal{O}_{K_{f,h},S}^\times + \dim_{\mathbb{F}_2}\mathrm{Cl}_S(K_{f,h})[2]\big)+1.
\end{align}
\end{lemma}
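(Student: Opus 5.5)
The plan is to reduce the left-hand side of \eqref{lem21ineq} to a cohomology computation over the degree $15$ field $K_{f,h}$, using the quadratic extension $K_f^{(1,2)}=K_{f,h}(\sqrt{c})$.

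\emph{Reduction to $K_{f,h}$.} Write $M\coloneqq K_{f,h}$ and $L\coloneqq K_f^{(1,2)}$, so $L=M(\sqrt{c})$ and $[L:M]=2$. By the proof of Lemma \ref{simple21isom}, $\mathbb{F}_2[\mathcal{R}_f]\otimes\wedge^2J[2]\simeq \mathrm{Ind}_K^{L}\mathbb{F}_2$, and by transitivity $\mathrm{Ind}_K^L\mathbb{F}_2\simeq \mathrm{Ind}_K^M(\mathrm{Ind}_M^L\mathbb{F}_2)$. The fields $K_f$, $M$ and $L$ are all contained in the field cut out by $J[2]$. Since $S$ contains the primes above $2$ and the primes of bad reduction, that field is unramified outside $S$, and hence so are $L/M$ and $M/K$. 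Shapiro's lemma for $G_{K,S}$, or equivalently the identification with $S$-Selmer groups as in the discussion before Lemma \ref{Scohomolem}, then gives
\begin{align*}
H^1(G_{K,S},\mathbb{F}_2[\mathcal{R}_f]\otimes\wedge^2J[2])\simeq H^1(G_{M,S},\mathrm{Ind}_M^L\mathbb{F}_2).
\end{align*}

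\emph{The key exact sequence.} Over $G_M$ there is the exact sequence
\begin{align*}
0\rightarrow \mathbb{F}_2\rightarrow \mathrm{Ind}_M^L\mathbb{F}_2\rightarrow \mathbb{F}_2\rightarrow 0,
\end{align*}
given by the diagonal and the sum map. Its long exact sequence reads
\begin{align*}
0\rightarrow H^0(\mathbb{F}_2)\rightarrow H^0(\mathrm{Ind}_M^L\mathbb{F}_2)\rightarrow H^0(\mathbb{F}_2)\xrightarrow{\delta} H^1(G_{M,S},\mathbb{F}_2)\rightarrow H^1(G_{M,S},\mathrm{Ind}_M^L\mathbb{F}_2)\rightarrow H^1(G_{M,S},\mathbb{F}_2).
\end{align*}
Because $L$ is a field, $H^0(\mathrm{Ind}_M^L\mathbb{F}_2)$ is one-dimensional and spanned by the diagonal element. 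So the first map is an isomorphism, the next map is zero, and $\delta$ is injective; concretely, $\delta(1)$ is the class of $c$. It follows that
\begin{align*}
\dim H^1(G_{M,S},\mathrm{Ind}_M^L\mathbb{F}_2)\leq 2\dim H^1(G_{M,S},\mathbb{F}_2)-1.
\end{align*}

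\emph{Evaluating over $M$.} Lemma \ref{Scohomolem}, applied with $L=M$, gives
\begin{align*}
\dim H^1(G_{M,S},\mathbb{F}_2)=\dim\mathcal{O}_{M,S}^\times\otimes\mathbb{F}_2+\dim\mathrm{Cl}_S(M)[2]=\mathrm{rk}_{\mathbb{Z}}\mathcal{O}_{M,S}^\times+1+\dim\mathrm{Cl}_S(M)[2],
\end{align*}
where the extra $1$ comes from the torsion $\pm1$ in the unit group. Substituting this into the previous inequality yields exactly $2(\mathrm{rk}_{\mathbb{Z}}\mathcal{O}_{M,S}^\times+\dim\mathrm{Cl}_S(M)[2])+1$, which is \eqref{lem21ineq}.

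\emph{Expected obstacle.} The step needing most care is the Shapiro identification at the level of $G_{K,S}$ rather than $G_K$. This requires checking that $L/M$ and $M/K$ are unramified outside $S$, so that everything is a $G_{K,S}$-module. The other delicate point is getting the correction term $-1$ from the injectivity of $\delta$. This is precisely what improves the naive bound $2\dim H^1(G_{M,S},\mathbb{F}_2)$ to the stated one, and it uses that $L$ is a field, i.e.\ that $c$ is nontrivial.
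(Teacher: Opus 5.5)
Your proof is correct and is essentially the paper's argument. Both reduce, via Lemma \ref{simple21isom} and Shapiro's lemma, to $H^1(G_{K_{f,h},S},\mathrm{Ind}_{K_{f,h}}^{K_f^{(1,2)}}\mathbb{F}_2)$. Both then use the sequence $0\to\mathbb{F}_2\to\mathrm{Ind}_{K_{f,h}}^{K_f^{(1,2)}}\mathbb{F}_2\to\mathbb{F}_2\to 0$ with $\delta_0$ injective (sending $1$ to $c$), bound the remaining term by $\dim H^1(G_{K_{f,h},S},\mathbb{F}_2)$, and finish with Lemma \ref{Scohomolem}. You also spell out two points the paper leaves implicit: the extra $1$ coming from $-1\in\mathcal{O}_{K_{f,h},S}^\times$, and the need for $K(J[2])$ to be unramified outside $S$. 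The latter is required for the left-hand side to be a $G_{K,S}$-cohomology group at all, even though it is not literally among the lemma's stated hypotheses.
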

\begin{proof}
This is a consequence of the following short exact sequence of $G_{K_{f,h}}$-modules: 
\begin{align}\label{quadboundary}
0\rightarrow \mathbb{F}_2 \rightarrow \mathrm{Ind}_{K_{f,h}}^{K_f^{(1,2)}}\mathbb{F}_2 \rightarrow \mathbb{F}_2 \rightarrow 0.
\end{align}
Specifically, by Lemma \ref{simple21isom} the dimension on the left-hand side of \eqref{lem21ineq} coincides with the dimension of the middle term in the exact sequence
\begin{align}\label{cokerkerSES}
0\rightarrow \mathrm{coker}(\delta_0)\rightarrow H^1(G_{K_{f,h},S},\mathrm{Ind}_{K_{f,h}}^{K_f^{(1,2)}}\mathbb{F}_2) \rightarrow \mathrm{ker}(\delta_1) \rightarrow 0
\end{align}
where $\delta_0$ and $\delta_1$ are the boundary maps
\begin{align*}
\delta_i&:H^i(G_{K_{f,h},S},\mathbb{F}_2)\rightarrow H^{i+1}(G_{K_{f,h},S},\mathbb{F}_2),
\end{align*}
associated to \eqref{quadboundary}. The boundary map $\delta_0$ is clearly injective as it sends $1\in \mathbb{F}_2$ to the class of $c\in K_{f,h}^\times \otimes \mathbb{F}_2$, so the dimension of its cokernel is $\dim_{\mathbb{F}_2}H^1(G_{K,S},\mathrm{Ind}_K^{K_{f,h}}\mathbb{F}_2)-1$. The result then follows from \eqref{cokerkerSES} and Lemma \ref{Scohomolem}.
\end{proof}

\begin{remark}
While Lemma \ref{d1lemma} suffices to give bounds sharp enough for the upcoming example, the crude bound on the kernel of $\delta_1$ used in its proof can be refined, since $\delta_1$ itself admits an explicit description: it sends $d \in H^1(G_{K_{f,h},S},\mathbb{F}_2)$ to the cup product $c\cup d $ in $H^2(G_{K_{f,h},S},\mathbb{F}_2)$. To obtain a field-theoretic description of this cup product is then possible via \cite{McCSha}.
\end{remark}

\subsection{A rank 4 curve} 

In this final subsection, we include an example of a hyperelliptic curve that satisfies the criterion of Theorem \ref{mainthm}, which is outside the scope of the quadratic Chabauty method. Indeed, this genus 2 curve has a Jacobian of Mordell--Weil rank 4 and N\'{e}ron--Severi rank 1. This latter condition follows as we can use \texttt{Magma} to confirm that the curve has analytic Jacobian with a trivial endomorphism algebra.  

Since no such odd hyperelliptic curve appears on the LMFDB \cite{lmfdb}, the following example was obtained by searching among defining polynomials with small coefficients. To ensure this search was computationally feasible, we first collected curves that satisfied the main criterion under the assumptions that $\dim_{\mathbb{Q}_2}H^1_f(\mathbb{Q},U[2])=3$ (the optimal bound achievable with the quadratic methods) and that $\mathrm{Cl}_S(\mathbb{Q}_f^{(1,2)})$ is 2-torsion-free, and then completed the computations on the resulting list of curves.

The following theorem details one such high rank example found in this search: 

\begin{theorem}\label{rank4thm}
Suppose that $X$ is the genus $2$ hyperelliptic curve over $\mathbb{Q}$, defined by the equation
\begin{align*}
y^2=4x^5 + 8x^4 + 4x^2 + 8x + 1,
\end{align*}
whose Jacobian has Mordell--Weil rank $4$. Then the depth $3$ Chabauty--Kim set $X(\mathbb{Q}_2)_3$ is finite. 
\end{theorem}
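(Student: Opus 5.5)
The plan is to verify the inequality of Theorem \ref{mainthm} with $g=2$ and $\mathrm{rk}_{\mathbb{Z}}J(\mathbb{Q})=4$. For $g=2$ the right-hand side is $\frac{1}{6}\cdot 3\cdot 5\cdot 8-4=20-4=16$, so it suffices to show
\begin{align*}
\dim_{\mathbb{Q}_2}H^1_f(\mathbb{Q},U[2])+d_{X,S}<16 .
\end{align*}
First I fix $S=\{2,\infty\}\cup\{\text{primes of bad reduction}\}$. These bad primes are read off from the discriminant of $4x^5+8x^4+4x^2+8x+1$. Since the leading coefficient is $4$, the substitution $y\mapsto y/2$, $x\mapsto x/4$ (or a similar rescaling) gives a monic model differing only at $2$, and $2\in S$ already. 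I also check in \texttt{Magma} that $f$ is irreducible and that $\mathbb{Q}_f^{(1,2)}$ is a field, so that the genus $2$ simplifications of the preceding subsection apply.

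Second, I bound the quadratic term $\dim_{\mathbb{Q}_2}H^1_f(\mathbb{Q},U[2])$ using the $2$-descent methods of \cite{bk1,bk2,1stpaper}. Concretely, I compute the refined subgroup of $H^1(G_S,\wedge^2J[2])$ cut out by the local conditions at the primes of $S$, aiming for the optimal value $3$ mentioned in the search description. Failing that, I settle for a bound small enough for the final count.

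Third, I bound $d_{X,S}$ via \eqref{dXSdef}. In genus $2$ we have $\wedge^3J[2]\simeq J[2]$, so
\begin{align*}
d_{X,S}=\dim H^1(G_S,\mathbb{F}_2[\mathcal{R}_f]\otimes\wedge^2J[2])-\dim H^1(G_S,\wedge^2J[2])-2\dim H^1(G_S,J[2]).
\end{align*}
The two subtracted terms come from Lemma \ref{Scohomolem}. For $J[2]$, the sequence \eqref{DefJ2} together with splitting gives $\dim H^1(G_S,\mathbb{F}_2[\mathcal{R}_f])-\dim H^1(G_S,\mathbb{F}_2)$, which reduces to $S$-units and the $S$-class group of the quintic field $\mathbb{Q}_f$. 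For $\wedge^2J[2]$, I use \eqref{WedgeSqDesc}, or equivalently the split sequence \eqref{wedgenSES} for $n=2$, which gives $\dim H^1(G_S,\mathbb{F}_2[\mathcal{R}_f^{(2)}])-\dim H^1(G_S,J[2])$ using the degree $10$ algebra $\mathbb{Q}_f^{(2)}$. The first term is bounded above by Lemma \ref{d1lemma}, which needs only the $S$-unit rank and the $2$-rank of the $S$-class group of the degree $15$ field $\mathbb{Q}_{f,h}$. This avoids the degree $30$ field $\mathbb{Q}_f^{(1,2)}$. Since the subtracted terms are computed exactly and the first term only bounded above, we get an upper bound for $d_{X,S}$, and that is all Theorem \ref{mainthm} needs.

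Finally, I add the two bounds and check that the total is at most $15$. The main obstacle is computational: obtaining $\mathrm{Cl}_S(\mathbb{Q}_{f,h})[2]$ for a degree $15$ field, possibly only under GRH, in which case I would try to certify the $2$-part unconditionally. A second risk is that the crude bound of Lemma \ref{d1lemma} is too weak. If the margin fails, I would first try the refinement via $\delta_1$, the cup product with $c$, to shrink $\ker(\delta_1)$. Failing that, I would compute $\mathrm{Cl}_S(\mathbb{Q}_f^{(1,2)})[2]$ directly.
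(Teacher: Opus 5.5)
Your proposal is correct and matches the paper's proof step for step. The paper takes $S=\{2,1115467,\infty\}$ and rewrites $d_X$ exactly as you do, in terms of $S$-unit and $S$-class group data for $\mathbb{Q}$, $\mathbb{Q}_f$, $\mathbb{Q}_f^{(2)}$ and, via Lemma \ref{d1lemma}, the degree-$15$ field $\mathbb{Q}_{f,h}$; this gives $d_X\le 29-12-8+3=12$, and the boundary-map methods of \cite{bk2} give $\dim_{\mathbb{Q}_2}H^1_f(\mathbb{Q},U[2])\le 3$, so the total is $15<16$ and none of your fallback refinements are needed. One small addition: besides noting $2\in S$, you should confirm that $2$ is a prime of good reduction, as the Chabauty--Kim setup at $p=2$ requires; it is, since $y=2Y+1$ gives the integral model $Y^2+Y=x^5+2x^4+x^2+2x$, whose reduction mod $2$ is smooth.
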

\begin{proof}
The curve has bad reduction only at $1115467$ and so we set $S=\{2,1115467,\infty\}$. To compute (a bound for) $d_X$, we use the splittings from Lemma \ref{wedgenlemma} and the fact that $J[2] \simeq \wedge^3 J[2]$ in the genus $2$ case to obtain the simplification
\begin{align}\label{dxsimp}
\begin{split}
d_{X} = \dim_{\mathbb{F}_2}H^1(G_S,\mathbb{F}_2[\mathcal{R}_f] \otimes {\wedge^2 J[2]})-\dim_{\mathbb{F}_2}H^1(G_S,\wedge^2 \mathbb{F}_2[\mathcal{R}_f])\\-\dim_{\mathbb{F}_2}H^1(G_S,\mathbb{F}_2[\mathcal{R}_f])+\dim_{\mathbb{F}_2}H^1(G_S,\mathbb{F}_2). 
\end{split}
\end{align}
To compute each of these we use Lemma \ref{Scohomolem}, while we additionally use Lemma \ref{d1lemma} to compute the first term. These computations are simplified by the fact that, through computing dimensions, we obtain the isomorphisms $\mathbb{Q}_f^{(1,2)}\simeq \mathbb{Q}(\alpha,\beta+\gamma)$, $\mathbb{Q}_f^{(2)}\simeq \mathbb{Q}(\alpha+\beta)$ and $\mathbb{Q}_f\simeq \mathbb{Q}(\alpha)$ for distinct roots $\alpha,\beta,\gamma \in \overline{\mathbb{Q}}$ of $f$. In particular, we need the following basic computations:
\begin{itemize}
    \item The number of finite primes above $S$ in $\mathbb{Q}_{f,h}$, $\mathbb{Q}_f^{(2)}$ and $\mathbb{Q}_f$ is $5$, $4$ and $3$, respectively. 
    \item The unit rank of $\mathbb{Q}_{f,h}$, $\mathbb{Q}_f^{(2)}$ and $\mathbb{Q}_f$ is $8$, $6$ and $3$, respectively. 
    \item The relevant $S$-class groups coincide:
    \begin{align*}
    \mathrm{Cl}_S(\mathbb{Q}_{f,h})=\mathrm{Cl}_S(\mathbb{Q}_f^{(2)})=\mathrm{Cl}_S(\mathbb{Q}_f)\simeq \mathbb{Z}/2\mathbb{Z}.
    \end{align*}
\end{itemize}
Accordingly, we obtain from Lemma \ref{d1lemma} that the $\mathbb{F}_2$-dimension of $H^1(G_S,\mathbb{F}_2 [\mathcal{R}_f]\otimes \wedge^2 J[2])$ is at most $29$. Substituting the remaining quantities into \eqref{dxsimp}, we obtain that $d_X\leq 12$. To bound the dimension of $H^1_f(\mathbb{Q},U[2])$, we use the boundary map techniques of \cite{bk2} to compute that this dimension is at most 3. The result then follows from Theorem \ref{mainthm}. 
\end{proof}

\bibliographystyle{alpha}
\bibliography{references}

\end{document}

%% file: header.tex
\usepackage{amsmath,amsfonts,amssymb,amsthm,enumitem,tikz-cd,graphicx,color,verbatim,xspace,bbm}
\usepackage[utf8]{inputenc}
\usepackage{amsmath,amsfonts,amssymb}
\usepackage{amsthm}
\usepackage{color,multicol}
\usepackage{yfonts}
\usepackage{tikz}
\usepackage[new]{old-arrows}
\usepackage{tikz-cd}
\usetikzlibrary{positioning, shapes.geometric, matrix}
\usetikzlibrary{matrix,decorations.pathreplacing, calc, positioning,fit}
\usepackage{scrextend}
\usepackage{adjustbox}
\usepackage{relsize}
\usepackage{fancyhdr}
\usepackage{graphicx}
\usepackage{babel,blindtext}
\usepackage{caption}
\usepackage{mathrsfs} 
\usepackage{scalerel}
\usepackage{mdframed}
\usepackage{centernot}
\usepackage{mathtools}
\usepackage{ stmaryrd }
\usepackage{enumitem}

\numberwithin{equation}{section}
\theoremstyle{plain}
\newcounter{theoremcount}[section]
\makeatletter
\@addtoreset{theoremcount}{section}
\@addtoreset{equation}{section}
\@addtoreset{equation}{section}
\makeatother

\newtheorem{theorem}[theoremcount]{Theorem}
\newtheorem*{theorem*}{Theorem}
\newtheorem{conjecture}[theoremcount]{Conjecture}
\newtheorem{lemma}[theoremcount]{Lemma}
\newtheorem{proposition}[theoremcount]{Proposition}
\newtheorem*{proposition*}{Proposition}

\theoremstyle{definition}
\newtheorem{definition}[theoremcount]{Definition}
\newtheorem{definition-construction}[theoremcount]{Definition/Construction}
\newtheorem{lemma-construction}[theoremcount]{Lemma/Construction}

\newtheorem{remark}[theoremcount]{Remark}

\usepackage[OT2,T1]{fontenc}
\DeclareSymbolFont{cyrletters}{OT2}{wncyr}{m}{n}
\DeclareMathSymbol{\Sha}{\mathalpha}{cyrletters}{"58}

\newtheoremstyle{introstyle}
  {3pt}
  {3pt}
  {\itshape}
  {}
  {\bfseries}
  {.}
  {.5em}
  {}

\theoremstyle{introstyle}
\newtheorem{introtheorem}{Theorem}

\usepackage[colorlinks=true, 
            citecolor=blue, 
            linkcolor=blue, 
            urlcolor=blue, 
            bookmarksopen=true]{hyperref}